\newtheorem{lma}{Lemma}[section]
\newaliascnt{thmCt}{lma}
\newtheorem{thm}[thmCt]{Theorem}
\newaliascnt{corCt}{lma}
\newtheorem{cor}[corCt]{Corollary}
\newaliascnt{prpCt}{lma}
\newtheorem{prp}[prpCt]{Proposition}
\theoremstyle{definition}
\newaliascnt{pgrCt}{lma}
\newtheorem{pgr}[pgrCt]{}
\newaliascnt{dfnCt}{lma}
\newtheorem{dfn}[dfnCt]{Definition}
\newaliascnt{rmkCt}{lma}
\newtheorem{rmk}[rmkCt]{Remark}
\newaliascnt{rmksCt}{lma}
\newaliascnt{exaCt}{lma}
\newaliascnt{qstCt}{lma}
\newtheorem{qst}[qstCt]{Question}
\newaliascnt{cnjCt}{lma}
\newaliascnt{ntnCt}{lma}
\newtheorem{ntn}[ntnCt]{Notation}
\newcounter{theoremintro}
\newtheorem{thmIntro}[theoremintro]{Theorem}
\newtheorem{exaIntro}[theoremintro]{Example}
\DeclareMathOperator{\sr}{sr}
\DeclareMathOperator{\rr}{rr}
\DeclareMathOperator{\gr}{gr}
\DeclareMathOperator{\gen}{gen}
\DeclareMathOperator{\Lg}{Lg}
\DeclareMathOperator{\Gen}{Gen}
\DeclareMathOperator{\locdim}{locdim}
\newcommand{\SubSep}{\mathrm{Sub}_{\mathrm{sep}}}
\newcommand{\vect}[1]{\mathbf{#1}}
\newcommand{\ca}{$C^*$-algebra}
\newcommand{\grPre}{\gr_0}
\newcommand{\freeVar}{\_\,}
\newcommand{\axiomD}[1]{(D#1)}
\newcommand{\sa}{\mathrm{sa}}
\title{The generator rank of $C^*$-algebras}
\author{Hannes Thiel}
\address{Hannes Thiel
Mathematisches Institut, Universit\"at M\"unster, Einsteinstr.~62, 48149 M\"unster, Germany.}
\email{hannes.thiel@posteo.de}
\urladdr{www.hannesthiel.org}
\thanks{The author was partially supported by the Danish National Research Foundation through the Centre for Symmetry and Deformation, and by the Deutsche Forschungsgemeinschaft (DFG, German Research Foundation) under the SFB 878 (Groups, Geometry \& Actions) and under Germany's Excellence Strategy EXC 2044-390685587 (Mathematics M\"{u}nster: Dynamics-Geometry-Structure).
}
\subjclass[2010]%
{Primary
46L05, 
46L85; 
Secondary
54F45, 
55M10. 
}
\keywords{$C^*$-algebras, dimension theory, real rank, generator rank, generator problem, single generation}
\date{\today}
\begin{document}

\begin{abstract}
We show that every AF-algebra is generated by a single operator.
This was previously unclear, since the invariant that assigns to a \ca{} its minimal number of generators lacks natural permanence properties.
In particular, it may increase when passing to ideals or inductive limits.

To obtain a better behaved theory, we not only ask if a \ca{} is generated by $n$ elements, but also if generating $n$-tuples are dense.
This defines the generator rank, which we show has many natural permanence properties:
it does not increase when passing to ideals, quotients or inductive limits.
\end{abstract}

\maketitle

\section{Introduction}

The generator problem for \ca{s} asks to determine which \ca{s} are singly generated.
More generally, for a given \ca{} $A$ one wants to compute the minimal number of generators. 
For a more detailed discussion of the generator problem, we refer to \cite{ThiWin14GenZStableCa}. 

Given a \ca{} $A$, let us denote by $\gen(A)$ the minimal number of \emph{self-adjoint} generators for $A$, and set $\gen(A)=\infty$ if $A$ is not finitely generated;
see \cite{Nag04SingleGenRnkCa}.
The restriction to self-adjoint elements is mainly for convenience.
It only leads to a minor variation of the original generator problem, since two self-adjoint elements $a$ and $b$ generate the same sub-\ca{} as the element $a+ib$.
In particular, $A$ is singly generated if and only if $\gen(A)\leq 2$.
For a compact, metric space $X$, it is easy to see that $\gen(C(X))\leq k$ if and only if $X$ can be embedded into $\mathbb{R}^k$.

It is also easy to see that $\gen(F)\leq 2$ for every finite-dimensional \ca{}~$F$.
However, this does not readily show that every AF-algebras is singly generated, since the minimal number of (self-adjoint) generators may increase when passing to inductive limits:

\begin{exaIntro}
Let $X\subseteq\mathbb{R}^2$ be the topologists sine-curve given by:
\[
X=\{0\}\times[-1,1] \cup \big\{ (t,\sin(t^{-1})) : t\in(0,1/2\pi] \big\}.
\]
Then $X$ can be embedded into $\mathbb{R}^2$ but not into $\mathbb{R}^1$, and therefore $\gen(C(X))=2$.
However, $X$ is an inverse limit of spaces $X_n$ that are each homeomorphic to the interval. 
Therefore $C(X)\cong\varinjlim_n C(X_n)$, with $\gen(C(X))=2$, while one the other hand $\gen(C(X_n))=\gen(C([0,1]))=1$ for all $n$.
The spaces $X$ and $X_1,X_2,X_3$ are shown below.
\[
\makebox{
\begin{tikzpicture}[xscale=30]
\draw [help lines, ->] (0.05,0) -- (0.115,0);
\draw [thick, domain=1/(7*pi):1/(3*pi), samples=500] plot (\x, {sin((1/\x) r)});
\node [right] at (0.06,1.3) {$X_1$=};
\end{tikzpicture}
}
\makebox{
\begin{tikzpicture}[xscale=30]
\draw [help lines, ->] (0.035,0) -- (0.115,0);
\draw [thick, domain=1/(9*pi):1/(3*pi), samples=500] plot (\x, {sin((1/\x) r)});
\node [right] at (0.05,1.3) {$X_2$=};
\end{tikzpicture}
}
\makebox{
\begin{tikzpicture}[xscale=30]
\draw [help lines, ->] (0.02,0) -- (0.115,0);
\draw [thick, domain=1/(11*pi):1/(3*pi), samples=500] plot (\x, {sin((1/\x) r)});
\node [right] at (0.05,1.3) {$X_3$=};
\end{tikzpicture}
}
\makebox{
\begin{tikzpicture}[xscale=30]
\draw [help lines, ->] (0.012,0) -- (0.115,0);
\draw [thick] (0.017,-1) -- (0.017,1);
\draw [dotted, thick] (0.020,-1) -- (0.020,1);
\draw [thick, domain=1/(15*pi):1/(3*pi), samples=500] plot (\x, {sin((1/\x) r)});
\node [right] at (0.05,1.3) {$X$=};
\end{tikzpicture}
}
\]

${}$

By considering the spaces $X\times[0,1]$ and $X_n\times[0,1]$, one obtains a sequence of singly generated \ca{s} whose inductive limit is not singly generated.
\end{exaIntro}

To get a better behaved theory, instead of counting the minimal number of generators, we count the minimal number of `stable' generators. 
More precisely, given a unital, separable \ca{} $A$, let $\Gen_n(A)_\sa\subseteq A^n_\sa$ be the subset of self-adjoint $n$-tuples that generate $A$;
see \autoref{pgr:notation_gen}.
Then $A$ has generator rank at most $n$, denoted by $\gr(A)\leq n$, if $\Gen_{n+1}(A)_\sa$ is dense in $A^{n+1}_\sa$;
see \autoref{prp:grSep}.
The index shift follows the usual convention in (noncommutative) dimension theory:
For instance, a topological space has covering dimension at most~$n$ if every finite cover can be refined by a cover with index $n+1$;
similarly, a unital \ca{} has real rank at most $n$ if every tuple of $n+1$ self-adjoint elements can be approximated by a selfadjoint tuple that generates the \ca{} as a left ideal (see \autoref{pgr:rr_sr}). 
Using the index shift also leads to simpler formulas that involve the generator rank.
For example, by \autoref{prp:rr_less_gr}, we always have $\rr(A)\leq\gr(A)$.

To handle nonunital \ca{s}, we first develop a precursor to the generator rank, denoted $\grPre$, and then define $\gr(A):=\grPre(\widetilde{A})$, where $\widetilde{A}$ is the minimal unitization of $A$.
To also handle nonseparable \ca{s}, we consider a relative version of generation:
given an element $c$, we ask if a given self-adjoint tuple $\vect{a}$ can be approximated by tuples $\vect{b}$ such that $c$ is (approximately) contained in the sub-\ca{} generated by $\vect{b}$;
see \autoref{dfn:grPre}.

A \ca{} has generator rank zero if and only if it is commutative with totally disconnected spectrum;
see \autoref{prp:charGr0}.
A unital, separable \ca{} $A$ has generator rank at most one if and only if a generic element of $A$ is a generator;
see \autoref{rmk:gr1_means_generic}.
We compute the generator rank of commutative \ca{s}:

\begin{thmIntro}[\ref{prp:gr_commutative}]
Let $X$ be a locally compact, Hausdorff space.
Then
\[
\gr(C_0(X))=\locdim(X\times X),
\]
where $\locdim$ denotes the local dimension, which is defined as the supremum of the covering dimension of all compact subsets (see \autoref{pgr:locdim}).
\end{thmIntro}

For unital, separable \ca{s}, `$\gen(A)\leq n+1$' records that $\Gen_{n+1}(A)_\sa$ is nonempty, while `$\gr(A)\leq n$' records that $\Gen_{n+1}(A)_\sa$ is dense.
Thus, the generator rank is often much larger than the minimal number of self-adjoint generators.
The payoff, however, is that the generator rank has nicer permanence properties:

\begin{thmIntro}[\ref{prp:gr_idealQuotExt}, \ref{prp:gr_approx_limits}]
Let $A$ be a \ca{} and let $I\subseteq A$ be a closed, two-sided ideal.
Then
\[
\max\big\{ \gr(I),\gr(A/I) \big\} \leq \gr(A) \leq \gr(I)+\gr(A/I)+1.
\]
Further, if $A=\varinjlim_\lambda A_\lambda$ is an inductive limit, then
\[
\gr(A) \leq \liminf_\lambda\gr(A_\lambda).
\]
\end{thmIntro}

By showing that finite-dimensional \ca{s} have generator rank at most one, we deduce:

\begin{thmIntro}[\ref{prp:gr_AF-alg}]
Let $A$ be a separable AF-algebra.
Then $\gr(A)\leq 1$, and so a generic element of $A$ is a generator.
In particular, $A$ is singly generated.
\end{thmIntro}

In subsequent work, \cite{Thi20arX:grSubhom}, we compute the generator rank of subhomogeneous \ca{s}.
We obtain in particular that every $\mathcal{Z}$-stable approximately subhomogeneous (ASH) \ca{} has generator rank one.
In further work, \cite{Thi20arX:grZstableRR0}, we show that $\mathcal{Z}$-stable \ca{s} of real rank zero have generator rank one.
It follows that every classifiable, simple \ca{} has generator rank one.

\subsection*{Acknowledgments}

The author thanks James Gabe and Mikael R{\o}rdam for valuable comments and feedback.
I also want to thank the anonymous referee whose suggestions helped to greatly improve the paper.

This paper grew out of joint work with Karen Strung, Aaron Tikuisis, Joav Orovitz and Stuart White that started at the workshop `Set theory and \ca{s}' at the AIM in Palo Alto, January 2012.
In particular, the key \autoref{prp:building_generators} was obtained in that joint work.
The author benefited from many fruitful discussions with Strung, Tikuisis, Orovitz and White, and he wants to thank them for their support.

\subsection*{Notation}

We set $\mathbb{N}:=\{0,1,2\ldots\}$.
Given a \ca{} $A$, we use $A_\sa$ and $A_+$ to denote the set of self-adjoint and positive elements in $A$, respectively.
We denote by $\widetilde{A}$ and $A^+$ the minimal and forced unitization of $A$, respectively.
By an ideal in a \ca{} we always mean a closed, two-sided ideal.

Given $a,b\in A$, and $\varepsilon>0$, we write $a=_\varepsilon b$ if $\|a-b\|<\varepsilon$.
Given $a\in A$ and $G\subseteq A$, we write $a\in_\varepsilon G$ if there exists $b\in G$ with $a=_\varepsilon b$.
We use bold letters to denote tuples of elements, for example $\vect{a}=(a_1,\ldots,a_n)\in A^n$.
Given $\vect{a},\vect{b}\in A^n$, we write $\vect{a}=_\varepsilon\vect{b}$ if $a_j=_\varepsilon b_j$ for $j=1,\ldots,n$.
We denote by $C^*(\vect{a})$ the sub-\ca{} of $A$ generated by the elements of $\vect{a}$.
We write $A^n_\sa$ for $(A_\sa)^n$, the space of $n$-tuples of self-adjoint elements.

By an \emph{nc-polynomial} we mean a polynomial in noncommuting variables.

\section{A precursor of the generator rank}
\label{sec:grPre}

In this section, we introduce the invariant $\grPre$ for \ca{s} and we show that it behaves well when passing to ideals, quotients, inductive limits and extensions.
In \autoref{sec:gr}, we define the generator rank of a \ca{} $A$ as $\grPre(\widetilde{A})$ and in \autoref{sec:permanence} we will see that $\gr$ satisfies the same permanence properties as $\grPre$.

\begin{dfn}
\label{dfn:grPre}
Let $A$ be a \ca{}.
We define $\grPre(A)$ as the smallest integer $n\geq 0$ such that for every $a_0,\ldots,a_n\in A_\sa$, $\varepsilon>0$ and $c\in A$, there exist $b_0,\ldots,b_n\in A_\sa$ such that
\[
\|b_j-a_j\|<\varepsilon \text{ for } j=0,\ldots,n, \quad\text{ and }\quad
c\in_\varepsilon C^*(b_0,\ldots,b_n).
\] 
If no such $n$ exists, we set $\grPre(A)=\infty$.
\end{dfn}

\begin{prp}
\label{prp:gr0_quotients}
Let $A$ be a \ca{} and let $I\subseteq A$ be a (closed, two-sided) ideal.
Then $\grPre(A/I)\leq\grPre(A)$.
\end{prp}
\begin{proof}
Set $n:=\grPre(A)$, which we may assume to be finite.
Let $\pi\colon A\to A/I$ denote the quotient map.
It naturally induces a surjective map $A^{n+1}\to(A/I)^{n+1}$, which we also denote by $\pi$.
To verify $\grPre(A/I)\leq n$, let $\vect{a}\in (A/I)^{n+1}_\sa$, $\varepsilon>0$ and $c\in A/I$.
Lift $\vect{a}$ and $c$ to obtain $\vect{x}\in A^{n+1}_\sa$ and $z\in A$ such that
\[
\pi(\vect{x})=\vect{a}, \quad\text{ and }\quad \pi(z)=c.
\]
Using that $\grPre(A)\leq n$, we obtain $\vect{y}\in A^{n+1}_\sa$ such that
\[
\vect{y}=_\varepsilon\vect{x}, \quad\text{ and }\quad
z\in_\varepsilon C^*(\vect{y}).
\]
Set $\vect{b}:=\pi(\vect{y})$.
Then $\vect{b}=_\varepsilon\vect{a}$, and $c\in_\varepsilon C^*(\vect{b})$, as desired.
\end{proof}

For the next result, recall that a collection $(A_\lambda)_{\lambda\in\Lambda}$ of sub-\ca{s} of $A$ is said to \emph{approximate} $A$ if for every finite subset $F\subseteq A$ and for every $\varepsilon>0$, there exists $\lambda\in\Lambda$ such that $a\in_\varepsilon A_\lambda$ for every $a\in F$.

\begin{prp}
\label{prp:gr0_approx}
Let $A$ be a \ca{} and $n\in\mathbb{N}$.
Assume that $A$ is approximated by sub-\ca{s} $A_\lambda\subseteq A$ with $\grPre(A_\lambda)\leq n$ for each $\lambda$.
Then $\grPre(A)\leq n$.
\end{prp}
\begin{proof}
To verify $\grPre(A)\leq n$, let $\vect{a}\in A^{n+1}_\sa$, $\varepsilon>0$ and $c\in A$.
Since the $A_\lambda$ approximate $A$, there exist $\lambda$, a tuple $\vect{x}\in (A_\lambda)^{n+1}_\sa$, and $z\in A_\lambda$ such that
\[
\vect{x}=_{\varepsilon/2}\vect{a}, \quad\text{ and }\quad
c=_{\varepsilon/2} z.
\]
Using that $\grPre(A_\lambda)\leq n$, we obtain $\vect{b}\in(A_\lambda)^{n+1}_\sa$ such that
\[
\vect{b}=_{\varepsilon/2}\vect{x}, \quad\text{ and }\quad
z\in_{\varepsilon/2} C^*(\vect{b}).
\]
Then $\vect{b}=_\varepsilon\vect{a}$ and $c\in_\varepsilon C^*(\vect{b})$, as desired.
\end{proof}

\begin{prp}
\label{prp:gr0_limits}
Let $A=\varinjlim_{\lambda\in\Lambda} A_\lambda$ be an inductive limit.
Then $\grPre(A)\leq\liminf_{\lambda} \grPre(A_\lambda)$.
\end{prp}
\begin{proof}
For each $\lambda\in\Lambda$, let $B_\lambda$ be the image of $A_\lambda$ in the inductive limit $A$.
Then $B_\lambda$ is a quotient of $A_\lambda$, and therefore $\grPre(B_\lambda)\leq\grPre(A_\lambda)$, by \autoref{prp:gr0_quotients}.

Let $\Lambda_0\subseteq\Lambda$ be a cofinal subset.
Then $A$ is approximated by the collection $(B_\lambda)_{\lambda\in\Lambda_0}$.
By \autoref{prp:gr0_approx}, we get
\[
\grPre(A)\leq \sup_{\lambda\in\Lambda_0} \grPre(B_\lambda) \leq \sup_{\lambda\in\Lambda_0} \grPre(A_\lambda).
\]
Since this holds for every cofinal subset $\Lambda_0\subseteq\Lambda$, we obtain:
\[
\grPre(A)
\leq \inf \big\{ \sup_{\lambda\in\Lambda_0} \grPre(A_\lambda) : \Lambda_0\subseteq\Lambda \text{ cofinal} \big\}
= \liminf_{\lambda\in\Lambda} \grPre(A_\lambda). \qedhere
\]
\end{proof}

Next, we will show that $\grPre$ does not increase when passing to ideals.
Since the proof is technically involved, we first prove two preparatory lemmas.
Recall that an \emph{nc-polynomial} is a polynomial in noncommuting variables.
We will say that a nc-polynomial $p$ (in $n+1$ variables) is homogeneous of degree $d$ if $q(ta_0,\ldots,ta_n)=t^d q(a_0,\ldots,a_n)$ for all $t\in\mathbb{R}_+$ and all elements $a_0,\ldots,a_n$ (in some \ca).
We will repeatedly use that every ideal in a \ca{} contains a quasi-central approximate unit;
see \cite[Proposition~II.4.3.2, p.82]{Bla06OpAlgs}.

\begin{lma}
\label{prp:gr0_ideals_lemma1}
Let $A$ be a \ca{}, let $I\subseteq A$ be an ideal, let $(h_\lambda)_\lambda$ be a quasi-central approximate unit for $I$, and let $p$ be a nc-polynomial that is homogeneous of degree $d$.
Then 
\[
\lim_\lambda \left\| h_\lambda^d\cdot p(a_0,\ldots,a_n) 
- p(h_\lambda^{1/2} a_0 h_\lambda^{1/2}, \ldots, h_\lambda^{1/2} a_n h_\lambda^{1/2}) \right\| = 0,
\]
for all $a_0,\ldots,a_n\in A$.
\end{lma}
\begin{proof}
Using that $(h_\lambda)_\lambda$ is quasi-central and that $p$ has degree $d$, we get
\[
\lim_\lambda \left\| h_\lambda^d\cdot p(a_0,\ldots,a_n) - p(h_\lambda a_0,\ldots,h_\lambda a_n) \right\| = 0.
\]
The result follows once we show that $\lim_\lambda \| h_\lambda a - h_\lambda^{1/2} a h_\lambda^{1/2} \| = 0$ for every $a\in A$.
To prove this, let $a\in A$ and $\varepsilon>0$.
We may assume that $a\neq 0$.
Use the Stone-Weierstrass theorem to obtain a polynomial $q$ such that $\|h^{1/2}-q(h)\|<\tfrac{\varepsilon}{3\|a\|}$ for every contractive, positive element $h$ in $A$.
Using that $(h_\lambda)_\lambda$ is quasi-central, choose $\lambda_0\in\Lambda$ such that $q(h_\lambda)a=_{\varepsilon/3} aq(h_\lambda)$ for all $\lambda\geq\lambda_0$.
Then
\[
h_\lambda^{1/2} a 
=_{\varepsilon/3} q(h_\lambda) a 
=_{\varepsilon/3} aq(h_\lambda) 
=_{\varepsilon/3} ah_\lambda^{1/2}
\]
for all $\lambda\geq\lambda_0$, as desired.
\end{proof}

In the next results, we set $|\vect{a}| :=\sum_{j=0}^n |a_j| = \sum_{j=0}^n (a_j^*a_j)^{1/2}$ for $\vect{a}\in A^{n+1}$.

\begin{lma}
\label{prp:gr0_ideals_lemma2}
Let $A$ be a \ca{}, let $I\subseteq A$ be an ideal, and let $n\in\mathbb{N}$ such that $\grPre(A)\leq n$.
Then for every $\vect{x}\in I^{n+1}_\sa$, $\varepsilon>0$ and $w\in I$, there exist $\vect{y}\in I^{n+1}_\sa, \delta>0$ and a nc-polynomial $p$ with vanishing zero coefficient such that
\[
\vect{y}=_{\varepsilon}\vect{x}, \quad\text{ and }\quad
w=_{\varepsilon} p(\vect{y}) (|\vect{y}|-\delta)_+ w.
\]
\end{lma}
\begin{proof}
Let $(h_\lambda)_{\lambda\in\Lambda}$ be a positive, contractive approximate unit for $I$ that is quasi-central with respect to $A$.
Given $\vect{a}=(a_0,\ldots,a_n)\in A^{n+1}$ and $\lambda\in\Lambda$, we set
\begin{align*}
\vect{a}^{(\lambda)}
&:= (h_\lambda^{1/2}a_0h_\lambda^{1/2},\ldots,h_\lambda^{1/2}a_nh_\lambda^{1/2}).
\end{align*}

\emph{Claim~1:
Let $\vect{a}\in A^{n+1}_\sa, w\in I, \varepsilon>0, \lambda_0\in\Lambda$, and let $p$ be a nc-polynomial with vanishing zero coefficient.
Then there exists $\lambda\geq\lambda_0$ such that}
\[
p(\vect{a})w =_\varepsilon p(\vect{a}^{(\lambda)})w.
\]

To prove the claim, decompose $p$ as $p=\sum_{d=1}^D p_d$, where $p_d$ is homogeneous of degree $d$.
Without loss of generality, we may assume that $w\neq 0$.
For each $d\in\{1,\ldots,D\}$, the net $(h_\lambda^d)_\lambda$ is an approximate unit for $I$;
see \cite[Proposition~II.4.1.2, p.79]{Bla06OpAlgs}.
Hence, we can choose $\lambda_1\geq\lambda_0$ such that
\[
\left\| p_d(\vect{a})w - p_d(\vect{a}) h_\lambda^d w \right\| < \frac{\varepsilon}{2D} 
\]
for all $d\in\{1,\ldots,D\}$ and $\lambda\geq\lambda_1$.
Using \autoref{prp:gr0_ideals_lemma1}, choose $\lambda\geq\lambda_1$ such that
\[
\left\| h_\lambda^d p_d(\vect{a}) - p_d(\vect{a}^{(\lambda)}) \right\| 
< \frac{\varepsilon}{2D\|w\|} \ \text{ for } d=1,\ldots,D. 
\]
Then
\begin{align*}
\left\| p_d(\vect{a}) w - p_d(\vect{a}^{(\lambda)}) w \right\|
&\leq \left\| p_d(\vect{a}) w - p_d(\vect{a}) h_\lambda^d w \right\| 
+ \left\| p_d(\vect{a}) h_\lambda^d w - p_d(\vect{a}^{(\lambda)}) w \right\| \\
&< \frac{\varepsilon}{2D} + \|w\| \frac{\varepsilon}{2D\|w\|} =  \frac{\varepsilon}{D}.
\end{align*}
It follows that $\| p(\vect{a}) w - p(\vect{a}^{(\lambda)}) w \| < \varepsilon$, which proves the claim.

\emph{Claim~2:
Let $\vect{y}\in I^{n+1}_\sa, \varepsilon>0$, and let $p_0$ be a nc-polynomial with vanishing zero coefficient.
Then there exists $\delta>0$ and a nc-polynomial $p_1$ such that}
\[
p_0(\vect{y}) =_\varepsilon p_0(\vect{y})p_1(\vect{y})(|\vect{y}|-\delta)_+.
\]

To prove the claim, note that $y_j\in\overline{I|\vect{y}|}$ for each $j\in\{0,\ldots,n\}$.
It follows that $p_0(\vect{y})\in\overline{I|\vect{y}|}$.
Hence
\[
\lim_{k\to\infty} \left\| p_0(\vect{y}) - p_0(\vect{y})|\vect{y}|^{1/k} \right\| = 0,
\]
which allows us to choose $k$ such that $p_0(\vect{y})=_{\varepsilon/2}p_0(\vect{y})|\vect{y}|^{1/k}$.
Using the Stone-Weierstrass theorem, choose a polynomial $p_1'$ such that $|\vect{y}|^{1/k} =_{\varepsilon/2\|p_0(\vect{y})\|} p_1'(|\vect{y}|)|\vect{y}|$.
Then
\[
p_0(\vect{y})
=_{\varepsilon/2} p_0(\vect{y})|\vect{y}|^{1/k}
=_{\varepsilon/2} p_0(\vect{y}) p_1'(|\vect{y}|)|\vect{y}|.
\]
Next, choose $\delta>0$ such that $(|\vect{y}|-\delta)_+$ is very close to $|\vect{y}|$, and choose a nc-polynomial $p_1$ such that $p_1(\vect{y})$ is very close to $p_1'(|\vect{y}|)|$, such that
\[
p_0(\vect{y}) =_\varepsilon p_0(\vect{y}) p_1(\vect{y}) (|\vect{y}|-\delta)_+.
\]
This proves the claim.

To prove the Lemma, let $\vect{x}\in I^{n+1}_\sa$, $\varepsilon>0$ and $w\in I$. 
We first choose $e\in I_+$ such that $w=_{\varepsilon/4} ew$.
Using that $\grPre(A)\leq n$, we obtain $\vect{a}\in A^{n+1}_\sa$ such that
\[
\vect{a}=_{\varepsilon/2}\vect{x}, \quad\text{ and }\quad
e\in_{\varepsilon/4\|w\|}C^*(\vect{a}).
\]
Choose a nc-polynomial $p_0$ with vanishing zero coefficient such that $e=_{\varepsilon/4\|w\|} p_0(\vect{a})$.
Let $\lambda_0$ such that $\vect{x}=_{\varepsilon/4}\vect{x}^{(\lambda)}$ for all $\lambda\geq\lambda_0$.
Using Claim~1, we obtain $\lambda\geq\lambda_0$ such that
\[
p_0(\vect{a})w =_{\varepsilon/4} p_0(\vect{a}^{(\lambda)})w.
\]
Set $\vect{y}:=\vect{a}^{(\lambda)} \in I^{n+1}_\sa$.
Since $\vect{a}=_{\varepsilon/2}\vect{x}$, we have $\vect{a}^{(\lambda)}=_{\varepsilon/2}\vect{x}^{(\lambda)}$.
Thus,
\[
\vect{y}
= \vect{a}^{(\lambda)}
=_{\varepsilon/2} \vect{x}^{(\lambda)}
=_{\varepsilon/2} \vect{x},
\]
and so $\vect{y}=_{\varepsilon}\vect{x}$.
Further
\begin{align*}
\big\| p_0(\vect{y})w - w \big\| 
&\leq \big\| p_0(\vect{a}^{(\lambda)})w - p_0(\vect{a})w \big\| 
+ \big\| p_0(\vect{a})w - ew \big\| + \big\| ew-w \big\| \\
&< \frac{\varepsilon}{4} + \|w\| \frac{\varepsilon}{4\|w\|} + \frac{\varepsilon}{4} 
= \frac{3}{4}\varepsilon.
\end{align*}
Using Claim~2, we obtain $\delta>0$ and a nc-polynomial $p_1$ such that
\[
p_0(\vect{y}) =_{\varepsilon/4\|w\|} p_0(\vect{y})p_1(\vect{y})(|\vect{y}|-\delta)_+.
\]
Let $p$ be the product of $p_0$ and $p_1$.
Then
\begin{align*}
\big\| p(\vect{y}) (|\vect{y}|-\delta)_+ w - w \big\|
&\leq \big\| p_0(\vect{y})p_1(\vect{y})(|\vect{y}|-\delta)_+w - p_0(\vect{y})w \big\| 
+ \big\| p_0(\vect{y})w - w \big\| \\
&< \|w\|\frac{\varepsilon}{4\|w\|} + \frac{3}{4}\varepsilon = \varepsilon,
\end{align*}
as desired.
\end{proof}

\begin{prp}
\label{prp:gr0_ideals}
Let $A$ be a \ca{}, and let $I\subseteq A$ be a (closed, two-sided) ideal.
Then $\grPre(I)\leq\grPre(A)$.
\end{prp}
\begin{proof}
Set $n:=\grPre(A)$, which we may assume to be finite.
Let $(h_\lambda)_{\lambda\in\Lambda}$ be a positive, contractive approximate unit for $I$ that is quasi-central with respect to $A$.
As in the proof of \autoref{prp:gr0_ideals_lemma2}, we set 
\begin{align*}
\vect{a}^{(\lambda)}
&:= (h_\lambda^{1/2}a_0h_\lambda^{1/2},\ldots,h_\lambda^{1/2}a_nh_\lambda^{1/2})
\end{align*}
for $\vect{a}=(a_0,\ldots,a_n)\in A^{n+1}$ and $\lambda\in\Lambda$.

To verify $\grPre(I)\leq n$, let $\vect{x}\in I^{n+1}_\sa$, $\varepsilon>0$ and $w\in I$.
We need to find $\vect{z}\in I^{n+1}_\sa$ such that
\[
\vect{z}=_\varepsilon\vect{x},\quad\text{ and }\quad
w\in_\varepsilon C^*(\vect{z}).
\]
Without loss of generality, we may assume that $w\neq 0$.
We construct $\vect{z}$ in 3 steps.

\emph{Step~1:}
Apply \autoref{prp:gr0_ideals_lemma2} to obtain $\vect{y}\in I^{n+1}_\sa, \delta>0$ and a nc-polynomial $p$ with vanishing zero coefficient such that
\begin{align}
\label{eq:gr0_ideals:Step1A}
\vect{y}=_{\varepsilon/2}\vect{x}, \quad\text{ and }\quad
w=_{\varepsilon/3} p(\vect{y}) (|\vect{y}|-\delta)_+ w.
\end{align}
Set
\[
M := \max \big\{ 2\|p(\vect{y})\|, 2\|(|\vect{y}|-\delta)_+\| \big\}.
\]
Choose $\eta>0$ so small such that for all $\vect{b}\in A^{n+1}_\sa$ with $\vect{b}=_\eta\vect{y}$ we have
\begin{align}
\label{eq:gr0_ideals:Step1B}
|\vect{b}| =_\delta |\vect{y}|,
\end{align}
and such that for all $\vect{z}\in I^{n+1}_\sa$ with $\vect{z}=_\eta\vect{y}$ we have
\begin{align}
\label{eq:gr0_ideals:Step1C}
w =_{\varepsilon/3} p(\vect{z})\cdot (|\vect{z}|-\delta)_+ w, \quad
\|p(\vect{z})\| \leq M, \quad
\|(|\vect{z}|-\delta)_+\| \leq M.
\end{align}
We may assume that $\eta\leq\tfrac{\varepsilon}{2}$ and $\eta\leq\delta$.

Using that $\grPre(A)\leq n$, we obtain $\vect{b}\in A^{n+1}_\sa$ and a nc-polynomial $q$ such that
\begin{align}
\label{eq:gr0_ideals:Step1D}
\vect{b}=_\eta\vect{y}, \quad\text{ and }\quad
w=_{\varepsilon/3M^2} q(\vect{b}).
\end{align}

\emph{Step~2:
We will show that}
\begin{align}
\label{eq:gr0_ideals:Step2a}
\lim_\lambda \left\| (|\vect{b}^{(\lambda)}|-\delta)_+ q(\vect{b}) 
- (|\vect{b}^{(\lambda)}|-\delta)_+ q(\vect{b}^{(\lambda)}) \right\| = 0.
\end{align}

We consider the product $\prod_{\lambda\in\Lambda} A$.
Set
\[
\bigoplus_\lambda A := \big\{ (a_\lambda)_{\lambda\in\Lambda} : \lim_\lambda\|a_\lambda\|=0 \big\},
\]
which is an ideal in  $\prod_{\lambda\in\Lambda} A$.
Then set $Q:=\prod_\lambda A / \bigoplus_\lambda A$.
Given $(a_\lambda)_\lambda\in\prod_{\lambda\in\Lambda} A$, we let $\langle (a_\lambda)_\lambda\rangle$ denote the image in $Q$.
For an element $a\in A$, we write $\langle a\rangle$ instead of $\langle (a)_\lambda\rangle$ for the image of the constant tuple.
Set $h:=\langle (h_\lambda)_\lambda\rangle\in Q$.
Since $(h_\lambda)_\lambda$ is quasi-central, $h$ commutes with $\langle a\rangle$, for every $a\in A$.

Let $d\geq 1$.
We have $\vect{b}=_\eta \vect{y}$ (see \eqref{eq:gr0_ideals:Step1D}), and thus $|\vect{b}|=_\delta|\vect{y}|$ by \eqref{eq:gr0_ideals:Step1B}.
Let $\pi\colon A\to A/I$ denote the quotient map.
Since $\vect{y}$ belongs to $I^{n+1}_\sa$, we have $\pi(|\vect{y}|)=0$.
Hence, $\pi(|\vect{b}|-\delta)\leq 0$ and so $\pi((|\vect{b}|-\delta)_+)=0$.
It follows that $\langle (|\vect{b}|-\delta)_+ \rangle \cdot (1-h^d) = 0$.

For each $j\in\{0,\ldots,n\}$, we have 
\begin{align*}
\left\langle \big( |b_j^{(\lambda)}| \big)_\lambda \right\rangle
&= \left\langle \big( ( h_\lambda^{1/2} b_j h_\lambda b_j h_\lambda^{1/2} )^{1/2} \big)_\lambda \right\rangle 
= \big( h^{1/2} \langle b_j\rangle h \langle b_j\rangle h^{1/2} \big)^{1/2} \\
&= \big( \langle b_j\rangle h^2 \langle b_j\rangle  \big)^{1/2}
\leq \big( \langle b_j\rangle \langle b_j\rangle  \big)^{1/2}
= \langle |b_j| \rangle,
\end{align*}
and thus
\[
\left\langle \big( |\vect{b}^{(\lambda)}| \big)_\lambda \right\rangle
= \sum_{j=0}^n \left\langle \big(  |b_j^{(\lambda)}| \big)_\lambda \right\rangle
\leq \sum_{j=0}^n \left\langle  |b_j|  \right\rangle
= \left\langle |\vect{b}|  \right\rangle
\]
Since $\langle ( |\vect{b}^{(\lambda)}| )_\lambda \rangle$ and $\langle |\vect{b}| \rangle$ commute, we get
\[
\left\langle \big( (|\vect{b}^{(\lambda)}|-\delta)_+ \big)_\lambda \right\rangle
\leq \left\langle (|\vect{b}|-\delta)_+ \right\rangle.
\]
It follows that $\langle (|\vect{b}^{(\lambda)}|-\delta)_+\rangle (1-h^d)=0$, and thus
\begin{align}
\label{eq:gr0_ideals:Step2b}
\lim_{\lambda } \left\| (|\vect{b}^{(\lambda)}|-\delta)_+ h_\lambda^d - (|\vect{b}^{(\lambda)}|-\delta)_+ \right\| = 0.
\end{align}

Decompose $q$ as $q=\sum_{d=1}^D q_d$, where $q_d$ is homogeneous of degree $d$.
For each $d\in\{1,\ldots,D\}$, we have $\lim_\lambda \| h_\lambda^d q_d(\vect{b}) - q_d(\vect{b}^{(\lambda)}) \| = 0$ by \autoref{prp:gr0_ideals_lemma1}.
Using \eqref{eq:gr0_ideals:Step2b}, it follows that
\[
\lim_\lambda \left\| (|\vect{b}^{(\lambda)}|-\delta)_+ q_d(\vect{b}) 
- (|\vect{b}^{(\lambda)}|-\delta)_+ q_d(\vect{b}^{(\lambda)} \right\| = 0,
\]
from which we deduce \eqref{eq:gr0_ideals:Step2a}.

\emph{Step~3:}
We have $\vect{b}=_\eta\vect{y}$ by \eqref{eq:gr0_ideals:Step1D}, and thus $\|\vect{b}^{(\lambda)}-\vect{y}^{(\lambda)}\| \leq \| \vect{b}-\vect{y} \| < \eta$ for all $\lambda$.
Using that $\lim_\lambda\| \vect{y}^{(\lambda)}-\vect{y}\|=0$, we can choose $\lambda_0\in\Lambda$ such that
\[
\vect{b}^{(\lambda)}=_\eta \vect{y}
\]
for all $\lambda\geq\lambda_0$.
Using \eqref{eq:gr0_ideals:Step2a}, we choose $\lambda\geq\lambda_0$ such that
\begin{align}
\label{eq:gr0_ideals:Step3}
(|\vect{b}^{(\lambda)}|-\delta)_+ q(\vect{b}) 
=_{\varepsilon/3M} (|\vect{b}^{(\lambda)}|-\delta)_+ q(\vect{b}^{(\lambda)}).
\end{align}

Let us check that $\vect{b}^{(\lambda)} \in I^{n+1}_\sa$ has the desired properties.
Since $\lambda\geq\lambda_0$, we have $\vect{b}^{(\lambda)}=_\eta\vect{y}$.
Using that $\eta\leq\tfrac{\varepsilon}{2}$, and using \eqref{eq:gr0_ideals:Step1A}, we get
\[
\vect{b}^{(\lambda)}
=_{\varepsilon/2} \vect{y}
=_{\varepsilon/2} \vect{x}.
\]

Since $\vect{b}^{(\lambda)}=_\eta\vect{y}$, by \eqref{eq:gr0_ideals:Step1C}, we have
\[
w =_{\varepsilon/3} p(\vect{b}^{(\lambda)})\cdot (|\vect{b}^{(\lambda)}|-\delta)_+ w, \quad
\left\| p(\vect{b}^{(\lambda)}) \right\| \leq M, \quad
\left\| (|\vect{b}^{(\lambda)}|-\delta)_+ \right\| \leq M.
\]

Using this at the first step,
using that $w=_{\varepsilon/3M^2}q(\vect{b})$ (see \eqref{eq:gr0_ideals:Step1D}) at the second step, 
and using \eqref{eq:gr0_ideals:Step3} at the third step, we get
\begin{align*}
w 
&=_{\varepsilon/3} p(\vect{b}^{(\lambda)})\cdot (|\vect{b}^{(\lambda)}|-\delta)_+ w \\
&=_{\varepsilon/3} p(\vect{b}^{(\lambda)})\cdot (|\vect{b}^{(\lambda)}|-\delta)_+ q(\vect{b}) \\
&=_{\varepsilon/3} p(\vect{b}^{(\lambda)})\cdot (|\vect{b}^{(\lambda)}|-\delta)_+ q(\vect{b}^{(\lambda)}).
\end{align*}

Since $p(\vect{b}^{(\lambda)})\cdot (|\vect{b}^{(\lambda)}|-\delta)_+ \cdot q(\vect{b}^{(\lambda)})$ belongs to $C^*(\vect{b}^{(\lambda)})$, we have $w\in_\varepsilon C^*(\vect{b}^{(\lambda)})$.
Thus $\vect{z}:=\vect{b}^{(\lambda)}$ has the desired properties.
\end{proof}

The next result is the key tool to construct generators in separable \ca{s}.
The basic idea was obtained together with Strung, Tikuisis, Orovitz and White at the workshop `Set theory and \ca{s}' at the AIM in Palo Alto, January 2012.

\begin{prp}
\label{prp:building_generators}
Let $A$ be a \ca{}, let $n\in\mathbb{N}$ such that $\grPre(A)\leq n$, let $\vect{a}\in A^{n+1}_\sa$, $\varepsilon>0$, and let $(c_j)_{j\in\mathbb{N}}$ be a sequence in $A$.
Then there exists $\vect{b}\in A^{n+1}_\sa$ such that
\[
\vect{b}=_\varepsilon\vect{a}, \quad\text{ and }\quad
c_0,c_1,\ldots \in C^*(\vect{b}).
\]
\end{prp}
\begin{proof}
Let $(d_k)_{k\geq 1}$ be a sequence in $A$ such that for each $j\in\mathbb{N}$ there are infinitely many $k\geq 1$ with $d_k=c_j$.
We may assume that $d_1=0$.
We inductively find tuples $\vect{b}_1,\vect{b}_2,\ldots \in A^{n+1}_\sa$ and numbers $\delta_1,\delta_2,\ldots>0$ such
\begin{enumerate}
\item
$\|\vect{b}_k-\vect{b}_{k-1}\| <\min\{ \tfrac{\delta_1}{2^{k-1}}, \tfrac{\delta_2}{2^{k-2}},\ldots,\tfrac{\delta_{k-1}}{2} \}$, for $k\geq 2$; and
\item
$d_k \in_{1/k} C^*(\vect{b}')$ for all $\vect{b}'\in A^{n+1}_\sa$ with $\| \vect{b}' - \vect{b}_k \| \leq \delta_k$, for $k\geq 1$.
\end{enumerate}

Set $\vect{b}_1:=\vect{a}$ and $\delta_1:=\tfrac{\varepsilon}{2}$.
We have $d_1=0\in C^*(\vect{b}_1)$, which shows that~(2) is satisfied for $k=1$.

Assume $\vect{b}_1,\ldots,\vect{b}_{k-1}$ and $\delta_1,\ldots,\delta_{k-1}$ have been obtained for some $k\geq 2$.
Using that $\grPre(A)\leq n$, we obtain $\vect{b}_k\in A^{n+1}_\sa$ such that
\[
\|\vect{b}_k-\vect{b}_{k-1}\| < \min \big\{ \frac{\delta_1}{2^{k-1}}, \frac{\delta_2}{2^{k-2}},\ldots,\frac{\delta_{k-1}}{2} \big\}, \quad\text{ and }\quad
d_k \in_{1/k} C^*(\vect{b}_k).
\]
Choose a nc-polynomial $p_k$ such that $d_k=_{1/k}p_k(\vect{b}_k)$.
Then $p_k$ is continuous as a function $A^{n+1}\to A$, which allows us to choose $\delta_k>0$ satisfying~(2).

Thus, we obtain $\vect{b}_k\in A^{n+1}_\sa$ and $\delta_k>0$ satisfying~(1) and~(2).
Condition~(1) implies that $(\vect{b}_k)_{k\geq 1}$ is a Cauchy sequence.
Set $\vect{b}:=\lim_{k\to\infty}\vect{b}_k \in A^{n+1}_\sa$, and let us check that it has the desired properties.

For each $k\geq 1$ and $l\geq 1$, repeated application of~(1) implies
\[
\| \vect{b}_{k+l}-\vect{b}_k \|
\leq \sum_{j=1}^{l} \| \vect{b}_{k+j}-\vect{b}_{k+j-1} \|
< \sum_{j=1}^{l} \frac{\delta_k}{2^j} \leq \delta_k,
\]
and thus $\|\vect{b}-\vect{b}_k\| = \lim_{l\to\infty} \| \vect{b}_{k+l}-\vect{b}_k \| \leq \delta_k$.
In particular,
\[
\|\vect{b}-\vect{a}\| 
= \|\vect{b}-\vect{b}_1\| 
\leq \delta_1 
= \frac{\varepsilon}{2}
< \varepsilon.
\]

Further, condition~(2) ensures that $d_k\in_{1/k}C^*(\vect{b})$ for all $k\geq 1$.
Then, for each $j\in\mathbb{N}$, it follows that $c_j\in C^*(\vect{b})$, since $c_j$ was assumed to appear infinitely many times in the sequence $(d_k)_k$.
\end{proof}

\begin{prp}
\label{prp:gr0_extensions}
Let $A$ be a \ca{}, and let $I\subseteq A$ be an ideal.
Then:
\begin{align*}
\grPre(A) \leq \grPre(I)+\grPre(A/I)+1.
\end{align*}
\end{prp}
\begin{proof}
Set $B:=A/I$.
Set $m:=\grPre(I)$ and $n:=\grPre(B)$, which we may assume to be finite.
Let $\pi\colon A\to B$ denote the quotient map.
It induces a natural map $A^{n+1}\to B^{n+1}$, also denote by $\pi$.
To verify $\grPre(A)\leq m+n+1$, let $\vect{x}\in A^{m+1}_\sa$, $\vect{y}\in A^{n+1}_\sa, \varepsilon>0$ and $z\in A$.
We need to find $\vect{x}'\in A^{m+1}_\sa$ and $\vect{y}'\in A^{n+1}_\sa$ such that
\[
\vect{x}'=_\varepsilon\vect{x},\quad
\vect{y}'=_\varepsilon\vect{y}, \quad\text{ and }\quad
z\in_\varepsilon C^*(\vect{x}', \vect{y}').
\]

Set $\vect{b}:=\pi(\vect{y})$.
Let $x_0,\ldots,x_m\in A_\sa$ such that $\vect{x}=(x_0,\ldots,x_m)$.
Since $\grPre(B)\leq n$, we can apply \autoref{prp:building_generators} to obtain $\vect{b}'\in B^{n+1}_\sa$ such that
\[
\vect{b}'=_\varepsilon\vect{b}, \quad\text{ and }\quad
\pi(z), \pi(x_0), \ldots, \pi(x_m) \in C^*(\vect{b}').
\]
Let $\vect{y}'\in A^{n+1}_\sa$ be a lift of $\vect{b}'$ with $\vect{y}'=_\varepsilon\vect{y}$.
Choose $w,a_0,\ldots,a_m\in C^*(\vect{y}')$ such that
\[
\pi(w)=\pi(z),\ \pi(a_0)=\pi(x_0),\ \ldots, \ \pi(a_m)=\pi(x_m).
\]
Set $\vect{a}=(a_0,\ldots,a_m)\in A^{m+1}_\sa$.
Then $\vect{x}-\vect{a}\in I^{m+1}_\sa$ and $z-w\in I$.
Since $\grPre(I)\leq m$, we can apply \autoref{prp:building_generators} to obtain $\vect{c}\in I^{m+1}_\sa$ such that
\[
\vect{c}=_\varepsilon\vect{x}-\vect{a}, \quad\text{ and }\quad z-w\in C^*(\vect{c}).
\]
Set $\vect{x}':=\vect{a}+\vect{c}$.
Then $\vect{x}'$ and $\vect{y}'$ have the desired properties.
\end{proof}

\begin{prp}
\label{prp:gr0_sepSub}
Let $A$ be a \ca{} and let $B_0\subseteq A$ be a separable sub-\ca.
Then there exists a separable sub-\ca{} $B\subseteq A$ such that $B_0\subseteq B$ and $\grPre(B)\leq\grPre(A)$.
\end{prp}
\begin{proof}
Set $n:=\grPre(A)$, which we may assume to be finite.
We will inductively obtain:
\begin{itemize}
\item
separable sub-\ca{s} $B_0\subseteq B_1\subseteq \ldots A$;
\item
a dense sequence $(\vect{b}^{(j,k)})_{k\in\mathbb{N}}$ in $(B_j)^{n+1}_\sa$, for each $j\in\mathbb{N}$;
\item
a sequence $(\vect{a}^{(j,k)})_{k\in\mathbb{N}}$ in $A^{n+1}_\sa$ such that
\[
\vect{a}^{(j,k)}=_{1/(j+1)}\vect{b}^{(j,k)}, \quad\text{ and }\quad
B_j\subseteq C^*(\vect{a}^{(j,k)})
\]
for each $k,j\in\mathbb{N}$.
\end{itemize} 
The algebra $B_0$ is given.
Assuming that we have obtained $B_j$, for some $j\in\mathbb{N}$, we choose any sequence $(\vect{b}^{(j,k)})_k$ that is dense in $(B_j)^{n+1}_\sa$.
Then, for each $k\in\mathbb{N}$, using that $\grPre(A)\leq n$, we apply \autoref{prp:building_generators}
for $\vect{b}^{(j,k)}\in A^{n+1}_\sa$, for $1/(j+1)$ and for any sequence in $B_j$ that generates $B_j$ as a \ca{} to obtain $\vect{a}^{(j,k)}\in A^{n+1}_\sa$ such that $\vect{a}^{(j,k)}=_{1/(j+1)}\vect{b}^{(j,k)}$ and $B_j\subseteq C^*(\vect{a}^{(j,k)})$.
Then set
\[
B_{j+1}:= C^*(A_j,\vect{a}^{(j,0)},\vect{a}^{(j,1)},\ldots) \subseteq A.
\]
Then $B_{j+1}$ is a separable sub-\ca{} of $A$.

Set $B:=\overline{\bigcup_j B_j}\subseteq A$, which is a separable sub-\ca{} of $A$ such that $B_0\subseteq B$.
To verify $\grPre(B)\leq n$, let $\vect{x}\in B^{n+1}_\sa, \varepsilon>0$ and $z\in B$.
We need to find $\vect{y}\in B^{n+1}_\sa$ such that
\[
\vect{y}=_\varepsilon\vect{x}, \quad\text{ and }\quad
z\in_\varepsilon C^*(\vect{y}).
\]
Using the definition of $B$, choose $j\in\mathbb{N}$ such that
\[
z,x_0,\ldots,x_n\in_{\varepsilon/2} B_j.
\]
We may assume that $j$ is so large that $\tfrac{1}{j+1}<\tfrac{\varepsilon}{2}$.
Since $(\vect{b}^{(j,k)})_k$ is dense in $(B_j)^{n+1}_\sa$, we obtain $k\in\mathbb{N}$ such that
\[
\vect{x}=_{\varepsilon/2} \vect{b}^{(j,k)}.
\]
Now set $\vect{y}:=\vect{a}^{(j,k)} \in (B_{j+1})^{n+1}_\sa \subseteq B^{n+1}_\sa$.
Then
\[
\| \vect{y} - \vect{x} \|
= \| \vect{a}^{(j,k)} - \vect{x} \|
\leq \| \vect{a}^{(j,k)} - \vect{b}^{(j,k)} \| + \| \vect{b}^{(j,k)} - \vect{x} \|
< \frac{1}{j+1} + \frac{\varepsilon}{2} \leq \varepsilon.
\]
Further, we have
\[
z\in_\varepsilon B_j \subseteq C^*(\vect{a}^{(j,k)}) = C^*(\vect{y}). \qedhere
\]
\end{proof}

\begin{pgr}
\label{pgr:ncDimThy}
In \cite[Definition~1]{Thi13TopDimTypeI}, I formalized the concept of a noncommutative dimension theory by proposing a set of axioms that such a theory should satisfy.
These axioms are generalizations of properties of the (local) dimension of locally compact, Hausdorff spaces, and it was shown that they are satisfied by many theories, in particular the real and stable rank, the topological dimension, the decomposition rank and the nuclear dimension.

A \emph{dimension theory} (defined on the class of all \ca{s}) is an assignment that to each \ca{} $A$ associates  $d(A)\in\{0,1,2,\ldots,\infty\}$ such that:
\begin{enumerate}
\item[\axiomD{1}]
$d(I)\leq d(A)$ whenever $I\subseteq A$ is an ideal in a \ca{} $A$;
\item[\axiomD{2}]
$d(A/I)\leq d(A)$ whenever $I\subseteq A$ is an ideal in a \ca{} $A$;
\item[\axiomD{3}]
$d(A\oplus B)=\max\{d(A),d(B)\}$, whenever $A$ and $B$ are \ca{s};
\item[\axiomD{4}]
$d(\widetilde{A})=d(A)$ for every \ca{} $A$;
\item[\axiomD{5}]
if a \ca{} $A$ is approximated by a family of sub-\ca{s} $A_\lambda\subseteq A$, and if $n\in\mathbb{N}$ is such that $d(A_\lambda)\leq n$ for all $\lambda$, then  $d(A)\leq n$;
\item[\axiomD{6}]
given a \ca{} $A$ and a separable sub-\ca{} $A_0\subseteq A$, there exists a separable sub-\ca{} $B\subseteq A$ such that $A_0\subseteq B$ and $d(B)\leq d(A)$.
\end{enumerate}

We have shown that $\grPre$ satisfies \axiomD{1}, \axiomD{2}, \axiomD{5} and \axiomD{6}.
It is unclear if $\grPre$ satisfies \axiomD{4} (see \autoref{qst:gr_grPre}), which is the reason for defining the generator rank of $A$ as $\grPre(\widetilde{A})$.
In \autoref{sec:permanence}, we will show that the generator rank satisfies \axiomD{1}, \axiomD{2}, and \axiomD{4}-\axiomD{6}.
It is unclear if $\grPre$ satisfies \axiomD{3}:
\end{pgr}

\begin{qst}
\label{qst:gr0_sums}
Do we have $\grPre(A\oplus B)=\max\{\grPre(A),\grPre(B)\}$ for all $A$ and $B$?
\end{qst}

\section{The generator rank}
\label{sec:gr}

We define the generator rank of a \ca{} $A$ as the invariant $\grPre$ developed in \autoref{sec:grPre} applied to the minimal unitization $\widetilde{A}$;
see \autoref{dfn:gr}.
It follows that $\grPre(A)$ and $\gr(A)$ are closely related, and in many cases we know that they agree;
see \autoref{prp:grPre_less_gr} and \autoref{pgr:gr_grPre}.

The generator rank of a \emph{separable}, unital \ca{} $A$ is the smallest integer $n$ such that the self-adjoint $(n+1)$-tuples in $A$ that generate $A$ as a \ca{} are dense in $A^{n+1}_\sa$;
see \autoref{prp:grSep}.
This is similar to the definition of the real rank, which also explains the index shift;
see also \autoref{rmk:gr_connection_rr}.

\begin{dfn}
\label{dfn:gr}
Let $A$ be a \ca{}.
The \emph{generator rank} of $A$ is defined as $\gr(A):=\grPre(\widetilde{A})$.
Thus, $\gr(A)$ is the smallest integer $n\geq 0$ such that for every $\vect{a}\in (\widetilde{A})^{n+1}_\sa$, $\varepsilon>0$ and $c\in A$, there exists $\vect{b}\in (\widetilde{A})^{n+1}_\sa$ such that
\[
\vect{b}=_\varepsilon\vect{a}, \quad\text{ and }\quad
c\in_\varepsilon C^*(\vect{b}).
\] 
If no such $n$ exists, we have $\gr(A)=\infty$.
\end{dfn}

\begin{ntn}
\label{pgr:notation_gen}
Let $A$ be a \ca{}. 
We denote the set of generating (self-adjoint) $n$-tuples by:
\begin{align*}
\Gen_n(A) := \big\{ \vect{a}\in A^n : A=C^*(\vect{a}) \big\}, \quad
\Gen_n(A)_\sa :=\Gen_n(A)\cap A^n_\sa.
\end{align*}
\end{ntn}

\begin{lma}
\label{prp:Gen-G-delta}
Let $A$ be a \ca{} and $n\geq 1$.
Then $\Gen_n(A)\subseteq A^n$ and $\Gen_n(A)_\sa\subseteq A^n_\sa$ are $G_\delta$-subsets.
\end{lma}
\begin{proof}
It suffices to show that $\Gen_n(A)\subseteq A^n$ is a $G_\delta$-subset. 
Since the empty set is $G_\delta$, we may assume that $\Gen_k(A)\neq\emptyset$, which in turn implies that $A$ is separable.
Let $c_1,c_2,\ldots$ be a dense sequence in $A$.
For $k\geq 1$, define:
\begin{align*}
U_k := \big\{ \vect{a}\in A^n : c_1,\ldots,c_k\in_{1/k}C^*(\vect{a}) \big\}.
\end{align*}
Since $\Gen_n(A)=\bigcap_{k\geq 1} U_k$, it suffices to prove that each $U_k$ is open.

Let $k\geq 1$ and $\vect{a}\in U_k$.
Then there exist nc-polynomials $p_1,\ldots,p_k$ such that $c_j=_{1/k}p_j(\vect{a})$ for $j=1,\ldots,k$.
We may consider each $p_j$ as a function $A^n\to A$, which is clearly continuous.
Therefore, for each $j$, there exists $\delta_j>0$ such that $c_j=_{1/k}p_j(\vect{b})$ for all $\vect{b}\in A^n$ with $\vect{b}=_{\delta_j}\vect{a}$.
Then the open ball around $\vect{a}$ with radius $\min\{\delta_1,\ldots,\delta_k\}$ is contained in $U_k$.
This proves that $U_k$ is open.
\end{proof}

The next result is a direct consequence of \autoref{prp:building_generators}.

\begin{thm}
\label{prp:grSep}
Let $A$ be a separable \ca{} and $n\in\mathbb{N}$.
Then:
\begin{enumerate}
\item
$\grPre(A)\leq n$ if and only if $\Gen_{n+1}(A)_\sa\subseteq A^{n+1}_\sa$ is a dense $G_\delta$-subset.
\item
$\gr(A)\leq n$ if and only if $\Gen_{n+1}(\widetilde{A})_\sa\subseteq (\widetilde{A})^{n+1}_\sa$ is a dense $G_\delta$-subset.
\end{enumerate} 
\end{thm}
\begin{proof}
By definition of $\gr(A)$, it suffices to prove the first statement.
Assume that $\grPre(A)\leq n$.
By \autoref{prp:Gen-G-delta}, $\Gen_{n+1}(A)_\sa\subseteq A^{n+1}_\sa$ is a $G_\delta$-subset.
To show that it is dense, let $\vect{a}\in A^{n+1}_\sa$ and $\varepsilon>0$.
Since $A$ is separable, there exists a sequence $(c_j)_{j\in\mathbb{N}}$ in $A$ such that $A=C^*(c_0,c_1,\ldots)$.
Applying \autoref{prp:building_generators}, we obtain $\vect{b}\in A^{n+1}_\sa$ such that
\[
\vect{b}=_\varepsilon\vect{a}, \quad\text{ and }\quad
c_0,c_1,\ldots \in C^*(\vect{b}).
\]
Then $A=C^*(\vect{b})$, which means that $\vect{b}$ belongs to $\Gen_{n+1}(A)_\sa$.

The converse implication is obvious.
\end{proof}

\begin{lma}
\label{prp:Gen-sa-nonsa}
Let $A$ be a \ca{} and $n\geq 1$.
Then $\Gen_n(A)\subseteq A^n$ is dense if and only if $\Gen_{2n}(A)_\sa\subseteq A^{2n}_\sa$ is dense.
\end{lma}
\begin{proof}
Let $\Phi\colon A^{2n}_\sa\to A^n$ be given by
\[
\Phi(a_1,\ldots,a_{2n}) := (a_1+ia_{n+1},\ldots,a_n+ia_{2n})\in A^n
\]
for $(a_1,\ldots,a_{2n}) \in A^{2n}_\sa$.

In general, two self-adjoint elements $c,d\in A_\sa$ generate the same sub-\ca{} as the element $c+id$.
It follows that $C^*(\vect{a})=C^*(\Phi(\vect{a}))\subseteq A$ for every $\vect{a}\in A^{2n}_\sa$.
Hence, $\Phi$ maps $\Gen_{2n}(A)_\sa$ onto $\Gen_n(A)$.
Since $\Phi$ is a homeomorphism, we deduce that $\Gen_{2n}(A)_\sa\subseteq A^{2n}_\sa$ is dense if and only if $\Gen_{n}(A)\subseteq A^{n}$ is.
\end{proof}

As a direct consequence of \autoref{prp:grSep} and \autoref{prp:Gen-sa-nonsa}, we have:

\begin{prp}
\label{prp:connection_gr_sa-nonsa}
Let $A$ be a separable \ca{} and $n\geq 1$.
Then:
\begin{enumerate}
\item
$\grPre(A)\leq 2n-1$ if and only if $\Gen_n(A)\subseteq A^n$ is a dense $G_\delta$-subset.
\item
$\gr(A)\leq 2n-1$ if and only if $\Gen_n(\widetilde{A})\subseteq (\widetilde{A})^n$ is a dense $G_\delta$-subset.
\end{enumerate} 
\end{prp}

\begin{rmk}
\label{rmk:gr1_means_generic}
In \autoref{prp:charGr0}, we show that a \ca{} $A$ satisfies $\gr(A)=0$ if and only if $A$ is commutative with totally disconnected spectrum.
Thus, for noncommutative \ca{s}, the lowest possible (and therefore most interesting) value of the generator rank is one.

Let $A$ be a unital \ca{}.
Using the homeomorphism between $A$ and $A^2_\sa$ (as in the proof of \autoref{prp:Gen-sa-nonsa}), we see that $\gr(A)\leq 1$ if and only if for every $a,c\in A$ and $\varepsilon>0$ there exists $b\in A$ such that
\[
\|b-a\|<\varepsilon, \quad\text{ and }\quad  c \in C^*(b).
\]
If $A$ is unital and separable, then $\gr(A)\leq 1$ if and only if a generic element in $A$ is a generator;
see \autoref{prp:connection_gr_sa-nonsa},

If $A$ is nonunital and separable, then $\gr(A)\leq 1$ implies $\grPre(A)\leq 1$ (by \autoref{prp:grPre_less_gr}) and thus a generic element in $A$ is a generator.
However, it is unclear if a nonunital \ca{} with a dense set of generators has generator rank at most one;
see \autoref{pgr:gr_grPre}.
\end{rmk}

\begin{pgr}
\label{pgr:rr_sr}
The generator rank is connected to the stable rank introduced by Rieffel \cite{Rie83DimSRKThy} and the real rank of Brown and Pedersen \cite{BroPed91CAlgRR0}.
To recall the definitions, let $A$ be a unital \ca{} and $n\geq 1$.
Set
\begin{align*}
\Lg_n(A) := \left\{ \vect{a}\in A^n : \sum_{j=1}^n a_j^\ast a_j \text{ invertible} \right\}, \quad\text{ and }\quad
\Lg_n(A)_\sa := \Lg_n(A)\cap A^n_\sa.
\end{align*}

The abbreviation `Lg' stands for `left generators', and the reason is that a tuple $\vect{a}\in A^n$ belongs to $\Lg_n(A)$ if and only if the elements $a_1,\ldots,a_n$ generate $A$ as a (not necessarily closed) left ideal, that is, $Aa_1+\ldots+Aa_n=A$.

The (topological) stable rank of $A$, denoted $\sr(A)$, is the smallest integer $n\geq 1$ such that $\Lg_n(A)$ is dense in $A^n$;
see \cite[Definition~1.4]{Rie83DimSRKThy}.
The real rank of $A$, denoted $\rr(A)$, is the smallest integer $n\geq 0$ such that $\Lg_{n+1}(A)_\sa$ is dense in $A^{n+1}_\sa$;
see \cite{BroPed91CAlgRR0}.
If $A$ is nonunital, one sets $\sr(A):=\sr(\widetilde{A})$ and $\rr(A):=\rr(\widetilde{A})$.

Notice the index shift in the definition of the real rank (as opposed to the definition of stable rank).
It leads to nicer formulas, for example $\rr(C(X))=\dim(X)$ if $X$ is a compact, Hausdorff space.
We use the same index shift in Definitions~\ref{dfn:grPre} and~\ref{dfn:gr} since the generator rank is more closely connected to the real rank than to the stable rank as explained in \autoref{rmk:gr_connection_rr}.
\end{pgr}

\begin{lma}
\label{prp:charLg}
Let $A$ be a unital \ca{}, $n\geq 1$, and $\vect{a}\in A^n_\sa$.
Then $\vect{a}$ belongs to $\Lg_n(A)_\sa$ if and only if $1\in C^*(\vect{a})$.
\end{lma}
\begin{proof}
Let us show the forward implication.
If $\vect{a}\in\Lg_n(A)$, then $a:=\sum_{j=1}^n a_j^\ast a_j$ is invertible.
Since $a\in C^*(\vect{a})$, it follows that $1\in C^*(\vect{a})$.

To show the backwards implication, assume that $1\in C^*(\vect{a})$.
Choose a nc-polynomial $p$ such that $\|1-p(\vect{a})\|<1$.
Then $p(\vect{a})$ is invertible, and we denote its inverse by $v\in A$.
Write $p$ as a sum of polynomials, $p=\sum_{j=1}^kp_j$, where each $p_j$ is of the form $p_j(\vect{a})=q_j(\vect{a})\cdot a_j$ for some other nc-polynomial $q_j$.
Then:
\[
1 = v \cdot p(\vect{a}) = \sum_{j=1}^k \left( v\cdot q_j(\vect{a}) \right) a_j,
\]
which shows that $\vect{a}$ generates $A$ as a left ideal.
\end{proof}

\begin{prp}
\label{prp:rr_less_gr}
Let $A$ be a \ca{}.
Then $\rr(A) \leq \gr(A)$.
\end{prp}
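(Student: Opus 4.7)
The plan is to deduce the inequality from the inclusion $\Gen_k(A)_\sa \subset \Lg_k(A)_\sa$ (for every unital $A$ and every $k \geq 1$), which is essentially sketched in \autoref{pgr:gr_non_sa}. The key observation is that whether $\vect{a}$ generates $A$ as a \Cs{} is a much stronger condition than whether $\vect{a}$ generates $A$ as a left ideal, and this is exactly what the proof will exploit.

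First I would reduce to the unital case: since $\gr(A):=\gr(\widetilde{A})$ by definition, and the real rank is likewise defined on non-unital \Cs{s} via the minimal unitization, an inequality in the unital case transfers automatically. So assume $A$ is unital and let $\vect{a}\in\Gen_k(A)_\sa$. Because $A=C^*(\vect{a})$ contains the unit, and $C^*(\vect{a})$ is the closure of the (non-unital) $*$-polynomial algebra generated by the self-adjoint entries $a_1,\ldots,a_k$, we can choose a polynomial $p$ with $p(0)=0$ and $\|1-p(\vect{a})\|<1$. Grouping the monomials of $p$ by their last variable, we may write $p(\vect{x})=\sum_{i=1}^k q_i(\vect{x})\,x_i$ for some polynomials $q_i$. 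Since $\|1-p(\vect{a})\|<1$, the element $p(\vect{a})$ is invertible in $A$; writing $v:=p(\vect{a})^{-1}$ gives
\[
1 \;=\; v\,p(\vect{a}) \;=\; \sum_{i=1}^k \bigl(v\,q_i(\vect{a})\bigr)\,a_i,
\]
so that $Aa_1+\cdots+Aa_k=A$. Equivalently, $\sum_i a_i^* a_i=\sum_i a_i^2$ is invertible, showing $\vect{a}\in\Lg_k(A)_\sa$.

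With the inclusion $\Gen_k(A)_\sa\subset\Lg_k(A)_\sa$ established, the proposition follows formally: if $\gr(A)\leq k$, then $\Gen_{k+1}(A)_\sa$ is dense in $A^{k+1}_\sa$, hence $\Lg_{k+1}(A)_\sa$ is dense in $A^{k+1}_\sa$, which means $\rr(A)\leq k$. Taking infima gives $\rr(A)\leq\gr(A)$, covering also the case $\gr(A)=\infty$ vacuously.

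There is no serious obstacle here; the only subtle point worth emphasizing in the write-up is why one may assume $p(0)=0$, namely that in a unital \Cs{} the equality $A=C^*(\vect{a})$ forces the unit to lie in the closure of non-unital polynomials in $\vect{a}$. Everything else is formal manipulation of polynomials and a standard Neumann-series invertibility argument.
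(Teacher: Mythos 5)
Your proposal is correct and follows essentially the same route as the paper: the author likewise deduces the inequality from the inclusion $\Gen_k(\widetilde{A})_\sa \subset \Lg_k(\widetilde{A})_\sa$, proved by the identical polynomial decomposition $p(\vect{x})=\sum_i q_i(\vect{x})\,x_i$ and a Neumann-series invertibility argument (see \autoref{pgr:gr_non_sa}). Your explicit remark on why one may take $p(0)=0$ is a point the paper leaves implicit, but the substance is the same.
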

\begin{proof}
By passing to the minimal unitization, if necessary, we may assume that $A$ is unital.
Set $n:=\gr(A)$, which we may assume to be finite.
To verify that $\rr(A)\leq n$, let $\vect{a}\in A^{n+1}_\sa$ and $\varepsilon>0$.
We need to find $\vect{b}\in\Lg_{n+1}(A)_\sa$ such that $\vect{b}=_\varepsilon\vect{a}$.
We may assume that $\varepsilon<1$.

Using that $\grPre(A)=\gr(A)\leq n$, we obtain $\vect{b}\in A^{n+1}_\sa$ such that
\[
\vect{b}=_\varepsilon\vect{a}, \quad\text{ and }\quad
1\in_\varepsilon C^*(\vect{b}).
\]
Since $\varepsilon<1$, it follows that $C^*(\vect{b})$ contains an invertible element, and thus $1\in C^*(\vect{b})$.
By \autoref{prp:charLg}, we obtain $\vect{b}\in\Lg_{n+1}(A)_\sa$, as desired.
\end{proof}

\begin{rmk}
\label{rmk:gr_connection_rr}
Let us explain why the generator rank is more closely related to the real rank than to the stable rank.
Let $A$ be a unital, separable \ca{}.
It follows from \autoref{prp:charLg} that $\Gen_n(A)_\sa\subseteq\Lg_n(A)_\sa$ for every $n\geq 1$, which directly implies that $\rr(A)\leq\gr(A)$.
(By \autoref{prp:rr_less_gr}, this inequality holds in general.)

The analog for non-selfadjoint (left) generators does not hold.
Consider $B:=M_3(C([0,1]^2))$.
Then $\sr(B)=2$ and so $\Lg_1(B)$ is not dense in $B$.
On the other hand, we have $\gr(B)=1$ by Theorem~4.17 and Remark~4.20 in \cite{Thi20arX:grSubhom}, and so $\Gen_1(B)$ is dense in $B$ by \autoref{prp:connection_gr_sa-nonsa}.
Thus, $\Gen_1(B) \nsubseteq \Lg_1(B)$, and there exists $b\in B$ such that $1\in C^*(b)$ but $b\notin\Lg_1(B)$.

Next, let us consider the variant of the generator rank, defined as the smallest integer $n\geq 1$ such that $\Gen_n(A)$ is dense in $A^n$.
Let us denote this value by $\gr'(A)$.
Given that $\gr$ is related to the real rank, one might expect that $\gr'$ is related to the stable rank.
This is, however, not the case either.
For instance, unlike the real and stable rank, the invariants $\gr$ and $\gr'$ are very closely tied together.
Indeed, it follows from \autoref{prp:connection_gr_sa-nonsa} that
\begin{align*}
\gr'(A) = \left\lceil \frac{\gr(A)+1}{2} \right\rceil.
\end{align*}
Moreover, while the estimate $\rr(A)\leq\gr(A)$ always holds (see \autoref{prp:rr_less_gr}), the above example $B:=M_3(C([0,1]^2))$ satisfies $\sr(B)=2\nleq 1=\gr'(B)$.
On the other hand, we have $\sr(C([0,1]))=1 \ngeq 2 = \gr'(C([0,1]))$.
Thus, we neither have $\sr\leq\gr'$ nor $\sr\geq\gr'$.
\end{rmk}

\begin{prp}
\label{prp:grPre_less_gr}
Let $A$ be a \ca{}.
Then $\grPre(A)\leq\gr(A)\leq\grPre(A)+1$.
\end{prp}
\begin{proof}
The statement is clear if $A$ is unital.
So assume that $A$ is nonunital.
Then $A$ is an ideal in $\widetilde{A}$ such that $\widetilde{A}/A\cong\mathbb{C}$.
By \autoref{prp:gr0_ideals}, we have
\[
\grPre(A)\leq\grPre(\widetilde{A})=\gr(A).
\]
Further, we have $\grPre(\mathbb{C})=0$, and therefore
\[
\gr(A)=\grPre(\widetilde{A})\leq\grPre(A)+\grPre(\mathbb{C})+1=\grPre(A)+1,
\]
by \autoref{prp:gr0_extensions}.
\end{proof}

\begin{thm}
\label{prp:gr_grPreMaxRR}
Let $A$ be a \ca{}.
Then $\gr(A) =\max\{\rr(A), \grPre(A)\}$.
\end{thm}
\begin{proof}
By Propositions~\ref{prp:rr_less_gr} and~\ref{prp:grPre_less_gr}, we have $\gr(A)\geq\max\{\rr(A),\grPre(A)\}$.
The converse inequality is clear if $A$ is unital.

So assume that $A$ is nonunital, and set $n:=\max\{\rr(A), \grPre(A)\}$, which we may assume to be finite.
We let $1\in\widetilde{A}$ denote the adjoint unit.
To verify $\gr(A)\leq n$, let $\vect{x}\in(\widetilde{A})^{n+1}_\sa, \varepsilon>0$ and $z\in\widetilde{A}$.
We need to find $\vect{y}\in(\widetilde{A})^{n+1}_\sa$ such that
\[
\vect{y}=_\varepsilon\vect{x},\quad\text{ and }\quad
z\in_\varepsilon C^*(\vect{y}).
\]

Since $\rr(A)\leq n$, there is $\vect{x}'\in\Lg_{n+1}(\widetilde{A})_\sa$ with $\vect{x}'=_{\varepsilon/2}\vect{x}$.
Since $\Lg_{n+1}(\widetilde{A})_\sa$ is an open subset of $\widetilde{A}^{n+1}_\sa$, there exists $\delta>0$ such that every $\vect{y}\in(\widetilde{A})^{n+1}_\sa$ with $\vect{y}=_\delta\vect{x}'$ belongs to $\Lg_{n+1}(\widetilde{A})_\sa$.
We may assume that $\delta<\varepsilon/2$.

Let $\vect{a}'\in A^{n+1}_\sa$, $\vect{r} \in (\mathbb{R} 1)^{n+1}\subseteq (\widetilde{A})^{n+1}_\sa$, $c\in A$ and $t\in\mathbb{C} 1 \subseteq\widetilde{A}$ be the unique elements such that
\[
\vect{x}' = \vect{a}' + \vect{r}, \quad\text{ and }\quad
z = c + t.
\]
Since $\grPre(A)\leq n$, we can apply \autoref{prp:building_generators} to obtain $\vect{b}\in A^{n+1}_\sa$ such that
\[
\vect{b}=_{\delta}\vect{a}', \quad\text{ and }\quad
c \in C^*(\vect{b}).
\]

Set $\vect{y}:=\vect{b}+\vect{r}$.
Let us show that $\vect{y}$ has the desired properties.

We have $\vect{y}=_\delta\vect{x}'$, and therefore $\vect{y}=_{\varepsilon/2}\vect{x}'=_{\varepsilon/2}\vect{x}$.
Further, by choice of $\delta$, we have $\vect{y}\in\Lg_{n+1}(\widetilde{A})_\sa$.
It follows that $1\in C^*(\vect{y})$, and so $b_j=y_j-r_j 1\in C^*(\vect{y})$ for $j=0,\ldots,n$.
Therefore $c\in C^*(\vect{b})\subseteq C^*(\vect{y})$.
Thus, $z=c+t\in C^*(\vect{y})$.
\end{proof}

\begin{cor}
\label{prp:grEqGrPre_rr1}
Let $A$ be a \ca{} satisfying $\rr(A)\leq 1$.
(For example, this is the case if $\rr(A)=0$, $\sr(A)=1$ or $\gr(A)\leq 1$.)
Then $\grPre(A)=\gr(A)$.
\end{cor}
\begin{proof}
If $\grPre(A)=0$ then $\gr(A)=0$ by \autoref{prp:charGr0}.
If $\grPre(A)\geq 1$, then \autoref{prp:gr_grPreMaxRR} implies
\[
\gr(A) = \max\{\rr(A), \grPre(A)\} = \grPre(A). \qedhere
\]
\end{proof}

\begin{pgr}
\label{pgr:gr_grPre}
Let $A$ be a \ca.
We have $\gr(A)=\grPre(A)$ if $\rr(A)\leq 1$ (\autoref{prp:grEqGrPre_rr1}) or if $A$ is commutative (\autoref{prp:gr_commutative}).
In \cite{Thi20arX:grSubhom}, we show that $\gr(A)=\grPre(A)$ also holds whenever $A$ is subhomogenoeus.
By \autoref{prp:grPre_less_gr} and \autoref{prp:gr_grPreMaxRR}, the only possibility for $\grPre(A)<\gr(A)$ is that
\[
\grPre(A)=n-1, \quad\text{ and }\quad 
\gr(A)=\rr(A)=n
\]
for some $n\geq 2$.
\end{pgr}

\begin{qst}
\label{qst:gr_grPre}
Does every \ca{} $A$ satisfy $\grPre(A)=\gr(A)$?
\end{qst}

\section{Reduction to the separable case}
\label{sec:reduction}

In this short section, we recall some notions from model theory of \ca{s} that will be used in \autoref{sec:commutative} to reduce some proofs to the case of separable \ca{s}.
For details we refer to \cite{FarKat10NonsepUHF1} and \cite{FarHarLupRobTikVigWin06arX:ModelThy}.

\begin{pgr}
\label{pgr:LS}
Given a \ca{} $A$, we let $\SubSep(A)$ denote the collection of separable sub-\ca{s} of $A$.
A family $\mathcal{S}\subseteq\SubSep(A)$ is \emph{$\sigma$-complete} if for every countable, directed subfamily $\mathcal{T}\subseteq\mathcal{S}$ we have $\overline{\bigcup\{B:B\in\mathcal{T}\}}\in\mathcal{S}$.
Further, $\mathcal{S}$ is \emph{cofinal} if for every $B_0\in\SubSep(A)$ there exists $B\in\mathcal{S}$ such that $B_0\subseteq B$.

We say that a property $\mathcal{P}$ of \ca{s} satisfies the \emph{L\"{o}wenheim-Skolem condition} if for every \ca{} $A$ satisfying $\mathcal{P}$, there exists $\sigma$-complete, cofinal subset $\mathcal{S}\subseteq\SubSep(A)$ such that every $B\in\mathcal{S}$ satisfies $\mathcal{P}$.

It is well-known that the intersection of countably many $\sigma$-complete, cofinal subfamilies is again $\sigma$-complete and cofinal.
It follows that the conjunction of countably many properties satisfying the L\"{o}wenheim-Skolem condition is again a property satisfying the L\"{o}wenheim-Skolem condition.

Let $d$ be an assignment from \ca{s} to $\{0,1,2,\ldots,\infty\}$ satisfying \axiomD{5} and \axiomD{6} from \autoref{pgr:ncDimThy}.
Then for each $n\in\mathbb{N}$, the property `$d(\freeVar)\leq n$' satisfies the L\"{o}wenheim-Skolem condition:
If a \ca{} $A$ satisfies $d(A)\leq n$, then the collection $\mathcal{S}\subseteq\SubSep(A)$ of separable sub-\ca{s} $B\subseteq A$ with $d(B)\leq n$ is $\sigma$-complete by \axiomD{5} and cofinal by \axiomD{6}.

Thus, for each $n\in\mathbb{N}$, the properties `$\rr(\freeVar)\leq n$' and `$\grPre(\freeVar)\leq n$' satisfy the L\"{o}wenheim-Skolem condition.
Given a unital \ca{} $A$, it is easy to see that the collection of separable sub-\ca{s} containing the unit of $A$ is $\sigma$-complete and cofinal.
\end{pgr}

Given a \ca{} $A$ and $k\geq 1$, we use $A^{\otimes k}$ to denote the $k$-fold (minimal) tensor product $A\otimes\ldots_k\otimes A$.
If $B\subseteq A$ is a sub-\ca{}, then $B^{\otimes k}$ naturally is a sub-\ca{} of $A^{\otimes k}$;
see \cite[II.9.6.2, p.187]{Bla06OpAlgs}.

\begin{lma}
\label{prp:LSTens}
Let $A$ be a \ca, let $k\geq 1$, and let $\mathcal{S}\subseteq\SubSep(A^{\otimes k})$ be $\sigma$-complete and cofinal.
Then $\{B\in\SubSep(A):B^{\otimes k}\in\mathcal{S}\}$ is $\sigma$-complete and cofinal.
\end{lma}
\begin{proof}
Set $\mathcal{T} := \{B\in\SubSep(A):B^{\otimes k}\in\mathcal{S}\}$.
It is easy to see that $\mathcal{T}$ is $\sigma$-complete.
To show that it is cofinal, let $B_0\in\SubSep(A)$.
We will inductively find increasing sequences $(B_n)_n$ in $\SubSep(A)$ and $(D_n)_n$ in $\mathcal{S}$ such that
\[
B_0^{\otimes k} \subseteq D_0 \subseteq B_1^{\otimes k} \subseteq D_1 \subseteq \ldots.
\]
Assume that we have obtained $B_n$ for some $n\in\mathbb{N}$.
Then $B_n^{\otimes k}\in\SubSep(A^{\otimes k})$.
Since $\mathcal{S}$ is cofinal, we obtain $D_n\in\mathcal{S}$ such that $B_n^{\otimes k}\subseteq D_n$.

For each $d\in D_n$ and $\varepsilon>0$, we can find $m\geq 1$ and $a_j^{(1)},\ldots,a_j^{(k)}\in A$ for $j=1,\ldots,m$ such that
\[
\left\| d - \sum_{j=1}^m a_j^{(1)}\otimes\ldots\otimes a_j^{(k)} \right\| < \varepsilon.
\]
Using this, we find a separable sub-\ca{} $B_{n+1}\subseteq A$ such that $D_n\subseteq B_{n+1}^{\otimes k}$.

Having chosen the sequences $(B_n)_n$ and $(D_n)_n$, set $B:=\overline{\bigcup_n B_n}$.
Then $B$ is a separable sub-\ca{} of $A$ containing $B_0$.
Moreover, by construction, we have
\[
B^{\otimes k} = \overline{ \bigcup_{n=1}^\infty D_n }.
\]
Since $\mathcal{S}$ is $\sigma$-complete, $B^{\otimes k}$ belongs to $\mathcal{S}$.
Thus, $B$ belongs to $\mathcal{T}$, as desired.
\end{proof}

\begin{prp}
\label{prp:rrTensLS}
Let $n\in\mathbb{N}$ and $k\geq 1$.
Then the property `$\rr(\freeVar^{\otimes k})\leq n$' satisfies the L\"{o}wenheim-Skolem condition.
\end{prp}
\begin{proof}
Let $A$ be a \ca{} satisfying $\rr(A^{\otimes k})\leq n$.
Set 
\[
\mathcal{S} := \big\{ B \subseteq A^{\otimes k} : B \text{ separable}, \rr(B)\leq n \big\}.
\]
Then $\mathcal{S}$ is $\sigma$-complete and cofinal, since the real rank is a noncommutative dimension theory (in the sense of \autoref{pgr:ncDimThy}).
Set
\[
\mathcal{T} := \{B\in\SubSep(A) : B^{\otimes k}\in\mathcal{S} \}.
\]
Note that $\mathcal{T}$ consists precisely of the separable sub-\ca{s} $B\subseteq A$ satisfying `$\rr(\freeVar^{\otimes k})\leq n$'.
By \autoref{prp:LSTens}, $\mathcal{T}$ is $\sigma$-complete and cofinal, as desired.
\end{proof}

\section{Generator rank of commutative \texorpdfstring{$C^*$-algebras}{C*-algebras}}
\label{sec:commutative}

In this section, we compute the generator rank of commutative \ca{s};
see \autoref{prp:gr_commutative}.
We first consider commutative \ca{s} that are unital and separable.
We then generalize to the unital, nonseparable case, and finally to the case of arbitrary commutative \ca{s}.

For topological spaces $X$ and $Y$ we denote by $E(X,Y)$ the space of injective, continuous maps $X\to Y$.

Let $X$ be a compact, metric space and $n\in\mathbb{N}$.
We seek to characterize when $\gr(C(X))\leq n$.
Given $\vect{a}\in C(X)^{n+1}_\sa$ and $x\in X$, set $\vect{a}(x):=(a_0(x),\ldots,a_n(x))$.
Since $C(X)$ is unital and separable, by \autoref{prp:grSep}, we have $\gr(C(X))\leq n$ if and only if the elements $\vect{a}\in C(X)^{n+1}_\sa$ that generate $C(X)$ are dense in $C(X)^{n+1}_\sa$.
By the Stone-Weierstrass theorem, $\vect{a}\in C(X)^{n+1}_\sa$ generates $C(X)$ if and only if $\vect{a}(x)\neq 0$ for all $x\in X$  and if $\vect{a}$ separates the points of $X$, that is, $\vect{a}(x)\neq\vect{a}(y)$ whenever $x\neq y$.

After identifying $C(X)^{n+1}_\sa$ with $C(X,\mathbb{R}^{n+1})$, we obtain that $\gr(C(X))\leq n$ if and only if $E(X,\mathbb{R}^{n+1}\setminus\{0\})$ is dense in $C(X,\mathbb{R}^{n+1})$.
This naturally leads to the following questions:
\begin{enumerate}[(1)  ]
\item
When is $C(X,\mathbb{R}^{n+1}\setminus\{0\})\subseteq C(X,\mathbb{R}^{n+1})$ dense?
\item
When is $E(X,\mathbb{R}^{n+1}\setminus\{0\})\subseteq C(X,\mathbb{R}^{n+1}\setminus\{0\})$ dense?
\end{enumerate}

In the proof of \autoref{prp:gr_commutative_sep} we will see that these questions are answered by the (covering) dimension of $X$ and $X\times X$, respectively.

The following result is closely related to the topological concepts of stable and unstable intersection, for which we refer to \cite{Lev18UnstableIntersectConj}.
We note that the result is probably known to the experts, but we could not locate it in the literature in this form.

\begin{prp}
\label{prp:approxByEmb}
Let $X$ be a compact, metric space, and $n\geq 1$.
Then $E(X,\mathbb{R}^n)\subseteq C(X,\mathbb{R}^n)$ is dense if and only if $\dim(X\times X)\leq n-1$.
\end{prp}
\begin{proof}
The backward implication follows from Theorem~1.1 and the Remark below it in \cite{DraRepSce91IntersectCpct}.
To show the forward implication, assume that $E(X,\mathbb{R}^n)\subseteq C(X,\mathbb{R}^n)$ is dense.

\emph{Claim: Let $Y,Z\subseteq X$ be closed, disjoint subsets.
Then $\dim(Y\times Z)\leq n-1$.}
To verify the claim, we first show that $Y$ and $Z$ have \emph{unstable intersection} in $\mathbb{R}^n$, that is, given continuous maps $f\colon Y\to\mathbb{R}^n$ and $g\colon Z\to\mathbb{R}^n$ and $\varepsilon>0$ there exist continuous maps $f'\colon Y\to\mathbb{R}^n$ and $g'\colon Z\to\mathbb{R}^n$ such that $\|f-f'\|_\infty<\varepsilon$, $\|g-g'\|_\infty<\varepsilon$ and $f'(Y)\cap g'(Z)=\emptyset$.
Given such $f, g$ and $\varepsilon>0$, using that $Y$ and $Z$ are disjoint we choose $h\in C(X,\mathbb{R}^n)$ such that $h|_{Y}=f$ and $h|_{Z}=g$.
By assumption, there exists $h'\in E(X,\mathbb{R}^n)$ with $\|h-h'\|_\infty<\varepsilon$.
Then $f':=h'|_{Y}$ and $g':=h|_{Z}$ have the desired properties.
Now it follows from \cite[Theorem~1.4]{DraWes92IntersectUnstably} that $\dim(Y\times Z)\leq n-1$.

Let $D:=\{(x,x): X\in X\}$ denote the diagonal in $X\times X$.
The complement $(X\times X)\setminus D$ can be covered by countably many compact rectangular sets, that is, there exist closed subsets $Y_k,Z_k\subseteq X$ for $k\in\mathbb{N}$ such that
\[
(X\times X)\setminus D = \bigcup_{k\in\mathbb{N}} (Y_k\times Z_k).
\]
For each $k$, note that $Y_k$ and $Z_k$ are disjoint and therefore $\dim(Y_k\times Z_k)\leq n-1$ by the Claim.
Note also that $X$ is covered by the sets $(Y_k)_{k\in\mathbb{N}}$.
Using the countable sum theorem (\cite[Theorem~3.2.5, p.125]{Pea75DimThy}), we get 
\[
\dim(D)
=\dim(X)
=\sup_{k\in\mathbb{N}}\dim(Y_k)
\leq \sup_{k\in\mathbb{N}}\dim(Y_k\times Z_k)
\leq n-1.
\]
Applying the countable sum theorem again, we obtain
\[
\dim(X\times X) 
= \max\{\dim(D),\sup_{k\in\mathbb{N}}\dim(Y_k\times Z_k)\}
\leq n-1. \qedhere
\]
\end{proof}

\begin{lma}
\label{prp:gr_commutative_sep}
Let $X$ be a compact, metric space.
Then $\gr(C(X))=\dim(X\times X)$.
\end{lma}
\begin{proof}
Let $n\in\mathbb{N}$.
As explained at the beginning of the section, we have $\gr(C(X))\leq n$ if and only if the following two conditions hold:
\begin{enumerate}[(1)  ]
\item
$C(X,\mathbb{R}^{n+1}\setminus\{0\})\subseteq C(X,\mathbb{R}^{n+1})$ is dense;
\item
$E(X,\mathbb{R}^{n+1}\setminus\{0\})\subseteq C(X,\mathbb{R}^{n+1}\setminus\{0\})$ is dense.
\end{enumerate}

By \cite[Proposition~10.3.2, p.378]{Pea75DimThy}, we have $\dim(X)\leq n$ if and only if $C(X,\mathbb{R}^{n+1}\setminus\{0\})\subseteq C(X,\mathbb{R}^{n+1})$ is dense.
Further, by \autoref{prp:approxByEmb}, we have $\dim(X\times X)\leq n$ if and only if $E(X,\mathbb{R}^{n+1})\subseteq C(X,\mathbb{R}^{n+1})$ is dense, which in turn is equivalent to $E(X,\mathbb{R}^{n+1}\setminus\{0\})\subseteq C(X,\mathbb{R}^{n+1}\setminus\{0\})$ being dense.

We deduce that $\gr(C(X))\leq n$ if and only if $\dim(X)\leq n$ and $\dim(X\times X)\leq n$.
Since $\dim(X)\leq\dim(X\times X)$, the result follows.
\end{proof}

Let $X$ be a finite-dimensional, compact, metric space.
In \cite[Theorem~3.16]{Dra01CohomDimThy}, Dranishnikov showed that the dimension of $X\times X$ can only take the values $2\dim(X)$ or $2\dim(X)-1$.
If $\dim(X\times X)=2\dim(X)$, then $X$ is said to be of \emph{basic type}, and then $\dim(X^k)=k\dim(X)$ for every $k\geq 1$, where $X^k=X\times\ldots_k\times X$ is the $k$-fold Cartesian power.
If $\dim(X\times X)=2\dim(X)-1$, then $X$ is said to be of \emph{exceptional type}, and then $\dim(X^k)=k\dim(X)-k+1$ for every $k\geq 1$.

If $\dim(X)\leq 1$, then $X$ is of basic type.
The prime example of a space of exceptional type was constructed by Boltyanski\u{\i} in \cite{Bol49TwoDimQuareThreeDim}:
It is a compact, metric space $P$ with $\dim(P)=2$ and $\dim(P^2)=3$.
For each $k\geq 1$, it follows that $\dim(P^k)=k+1$ and $\dim((P^k)^2)=2(k+1)-1$, and thus $P^k$ is a $(k+1)$-dimensional compact, metric space of exceptional type.

Next, we want to generalize \autoref{prp:gr_commutative_sep} to compact, Hausdorff spaces.
The basic idea is to approximate a given compact, Hausdorff space $X$ by suitable compact, metric spaces $X_\lambda$.
It is a standard technique to approximate $X$ by polyhedra $X_\lambda$ with $\dim(X_\lambda)=\dim(X)$.
However, this does not ensure that $\dim(X_\lambda\times X_\lambda)=\dim(X\times X)$, and indeed, polyhedra are always of basic type, so if $X$ is of exceptional type than an approximation of $X$ by polyhedra is not suitable.
To address this, we develop a different way of approximation.
The next result also shows that the dichotomy of basic/exceptional type for compact, metric spaces generalizes to compact, Hausdorff spaces, which will be crucial in the proof of \autoref{prp:dim_space_squared}.

\begin{prp}
\label{prp:typeOfSpace}
Let $X$ be a finite-dimensional, compact, Hausdorff space.
Set $n:=\dim(X)$.
Then either $\dim(X\times X)=2n$ (we say that $X$ is of \emph{basic type}) and then $\dim(X^k)=kn$ for every $k\geq 1$;
or $\dim(X\times X)=2n-1$ (we say that $X$ is of \emph{exceptional type}) and then $\dim(X^k)=kn-k+1$ for every $k\geq 1$.
\end{prp}
\begin{proof}
By the product theorem for covering dimension, we have $\dim(Y\times Z)\leq\dim(Y)+\dim(Z)$ for any compact, Hausdorff spaces $Y$ and $Z$; 
see \cite[Proposition~9.3.2, p.352]{Pea75DimThy}.
Thus, $\dim(X^k)\leq kn$ for every $k\geq 1$.
The statement follows from the following claims.

\emph{Claim~1: Let $k\geq 2$ such that $\dim(X^k)<kn$.
Then $\dim(X^l)\leq ln-l+1$ for all $l\geq 1$.}
To prove the claim, set $A:=C(X)$. 
Then $A^{\otimes k}\cong C(X^k)$, and so $\rr(A)=\dim(X)=n$ and $\rr(A^{\otimes k})<kn$.
Consider
\[
\mathcal{S} := \big\{ B\subseteq A \text{ unital, separable sub-\ca} : \rr(B)\leq n, \rr(B^{\otimes k})<kn \big\}.
\]
Given $B\in\mathcal{S}$, let $Y$ be a compact, metric space such that $B\cong C(Y)$.
Then $\dim(Y)=\rr(B)\leq n$ and $\dim(Y^k)=\rr(B^{\otimes k})<kn$.
If $\dim(Y)=n$, then $Y$ is of exceptional type and we get $\dim(Y^l)=ln-l+1$ for all $l\geq 1$.
If $\dim(Y)\leq n-1$, then $\dim(Y^l)\leq l(n-1)$.
In either case $\rr(B^{\otimes l})=\dim(Y^l)\leq ln-l+1$ for all $l\geq 1$.

Since unitality and the properties `$\rr(\freeVar)\leq n$' and `$\rr(\freeVar^{\otimes k})<kn$' satisfy the L\"{o}wenheim-Skolem condition (see \autoref{pgr:LS} and \autoref{prp:rrTensLS}), it follows that the family $\mathcal{S}$ approximates $A$.
Hence, for each $l\geq 1$, $A^{\otimes l}$ is approximted by sub-\ca{s} of the form $B^{\otimes l}$ with $\rr(B^{\otimes l})\leq ln-l+1$.
Since the real rank satisfies \axiomD{5}, we get $\dim(X^l)=\rr(A^{\otimes l})\leq ln-l+1$.

\emph{Claim~2: Let $k\geq 2$. Then $\dim(X^k)\geq kn-k+1$.}
To prove the claim via contradiction, asssume that $k\geq 2$ is such that $\dim(X^k)< kn-k+1$.
Consider
\[
\mathcal{S} := \big\{ B\subseteq A \text{ unital, separable sub-\ca} : \rr(B^{\otimes k})<kn-k+1 \big\}.
\]
Given $B\in\mathcal{S}$, let $Y$ be a compact, metric space such that $B\cong C(Y)$.
Then $\dim(Y^k)=\rr(B^{\otimes k})<kn-k+1$.
It follows that $\dim(Y)\leq n-1$.

As in Case~1, using that unitality and `$\rr(\freeVar^{\otimes k})<kn-k+1$' satisfy the L\"{o}wenheim-Skolem condition, we obtain that $\mathcal{S}$ approximates $A$. 
Since the real rank satisfies \axiomD{5}, we get $\dim(X)=\rr(A)\leq n-1$, a contradiction.
\end{proof}

\begin{prp}
\label{prp:gr_commutative_unit}
Let $X$ be a compact, Hausdorff space.
Then
\[
\gr(C(X))=\dim(X\times X).
\]
\end{prp}
\begin{proof}
Since $\dim(X\times X)=\rr(C(X)\otimes C(X))$, and since $\gr(\freeVar)=\grPre(\freeVar)$ for unital \ca{s}, it suffices to show that $\grPre(A)=\rr(A^{\otimes 2})$ for every unital, commutative \ca{} $A$.
By \autoref{prp:gr_commutative_sep}, we have $\grPre(B)=\rr(B^{\otimes 2})$ for every \emph{separable}, unital, commutative \ca{} $B$.

Let $A$ be a unital, commutative \ca{}, and set $n:=\grPre(A)$.
Since unitality and `$\rr(\freeVar^{\otimes 2})\leq n$' satisfy the L\"{o}wenheim-Skolem condition (see \autoref{pgr:LS} and \autoref{prp:rrTensLS}), there exists a $\sigma$-complete, cofinal family $\mathcal{S}\subseteq\SubSep(A)$ such that each $B\in\mathcal{S}$ is a separable, unital, commutative \ca{s} satisfying $\rr(B^{\otimes 2})\leq n$.
Then $\grPre(B)\leq n$.
Since $\grPre$ satisfies \axiomD{6} by \autoref{prp:gr0_approx}, we obtain that $\grPre(A)\leq n=\rr(A\otimes A)$.

The converse inequality follows analogously, using that unitality and `$\grPre(\freeVar)\leq n$' satisfy the L\"{o}wenheim-Skolem condition, and using that $\rr(\freeVar\otimes\freeVar)$ satisfies \axiomD{6}.
\end{proof}

\begin{pgr}
\label{pgr:locdim}
Let $X$ be a locally compact, Hausdorff space.
If $X$ is not compact, then $C_0(X)$ is not unital, and the minimal unitization of $C_0(X)$ is naturally isomorphic to $C(\alpha X)$, where $\alpha X$ denotes the one-point compactification of $X$.
As noted in \cite[2.2(ii)]{BroPed09Limits}, it follows from \cite[Proposition~3.5.6]{Pea75DimThy} that the dimension of $\alpha X$ agrees with the \emph{local dimension} of $X$, which is defined as
\[
\locdim(X) := \sup \big\{ \dim(K) : K\subseteq X \text{ compact } \big\}.
\]
A similar argument shows that
\[
\dim(\alpha X \times \alpha X) = \locdim(X\times X).
\]
\end{pgr}

\begin{thm}
\label{prp:gr_commutative}
Let $X$ be a locally compact, Hausdorff space.
Then
\[
\grPre(C_0(X))
= \gr(C_0(X))
= \locdim(X\times X).
\]
\end{thm}
\begin{proof}
By \autoref{prp:grPre_less_gr}, we have $\grPre(C_0(X)) \leq \gr(C_0(X))$.
Let $K\subseteq X$ be a compact subset.
Then $C(K)$ is a quotient of $C_0(X)$.
Hence, using \autoref{prp:gr_commutative_unit} at the first step, and using \autoref{prp:gr0_quotients} at the last step, we get
\[
\dim(K\times K)
= \gr(C(K))
= \grPre(C(K))
\leq \grPre(C_0(X)).
\]
Since every compact subset of $X\times X$ is contained in $K\times K$ for some compact subset $K\subseteq X$, we deduce that
\[
\locdim(X\times X) \leq \grPre(C_0(X)) \leq \gr(C_0(X)).
\]
Finally, using \autoref{prp:gr_commutative_unit}, we have
\[
\gr(C_0(X))
= \gr(C(\alpha X))
= \dim(\alpha X\times \alpha X)
= \locdim(X\times X).\qedhere
\] 
\end{proof}

\begin{prp}
\label{prp:charGr0}
Let $A$ be a \ca{}.
Then the following are equivalent:
\begin{enumerate}
\item
We have $\gr(A)=0$;
\item
We have $\grPre(A)=0$;
\item
$A$ is a commutative \ca{} with totally disconnected spectrum.
\end{enumerate}
\end{prp}
\begin{proof}
\emph{Claim: Let $X$ be a locally compact, Hausdorff space.
Then $X$ is totally disconnected if and only if $\locdim(X\times X)=0$.}
It is well-known that $X$ is totally disconnected if and only if $X$ is zero-dimensional (that is, the topology of $X$ has a basis of clopen sets), which in turn is equivalent to $\locdim(X)=0$.
If a compact, Hausdorff space $Y$ satisfies $\dim(Y)=0$, then $\dim(Y\times Y)\leq 2\dim(Y)=0$ by the product theorem for covering dimension (\cite[Theorem~9.3.2, p.351]{Pea75DimThy}).
The converse also holds, and thus $Y$ satisfies $\dim(Y)=0$ if and only if $\dim(Y\times Y)=0$.
We deduce that $\locdim(X)=0$ if and only if $\locdim(X\times X)=0$, which proves the claim.

By \autoref{prp:grPre_less_gr}, we have $\grPre(A)\leq\gr(A)$, which shows that~(1) implies~(2).
Assuming~(2), let us verify~(3). 
Let $a,b\in A$.
Apply \autoref{prp:building_generators} (for any $x$ and $\varepsilon$) to obtain $y\in A_\sa$ such that $a,b\in C^*(y)$.
Since $y$ is self-adjoint, $C^*(y)$ is commutative, and thus $a$ and $b$ commute.
Since this holds for all $a,b\in A$, we deduce that $A$ is commutative.
Let $X$ be a locally compact, Hausdorff space such that $A\cong C_0(X)$.
By \autoref{prp:gr_commutative}, we have $\locdim(X\times X)=\gr(A)=0$, and thus $X$ is totally disconnected by the Claim.

Finally, assuming~(3), let us prove~(1).
Let $A=C_0(X)$ for a totally disconnected, locally compact Hausdorff space $X$.
Using the claim, we get $\locdim(X\times X)=0$, and thus $\gr(A)=0$ by \autoref{prp:gr_commutative}.
\end{proof}

\begin{lma}
\label{prp:dim_space_squared}
Let $X, Y$ be locally compact, Hausdorff spaces, and let $Z:=X\sqcup Y$ denote the disjoint union.
Then
\[
\locdim(Z\times Z) = \max\big\{ \locdim(X\times X), \locdim(Y\times Y) \big\}.
\]
\end{lma}
\begin{proof}
The inequality `$\geq$' follows since $X\times X$ and $Y\times Y$ are homeomorphic to closed subsets of $Z\times Z$.
To show the converse inequality, let $K\subseteq Z\times Z$ be a compact subset.
Then there exist compact subsets $X'\subseteq X$ and $Y'\subseteq Y$ such that $K$ is contained in $(X'\cup Y')\times (X'\cup Y')$.
If we can show the inequality for $X'$ and $Y'$, then
\begin{align*}
\dim(K)
&\leq\dim((X'\cup Y')\times (X'\cup Y')) 
\leq \max\big\{ \dim(X'\times X'), \dim(Y'\times Y') \big\} \\
&\leq \max\big\{ \locdim(X\times X), \locdim(Y\times Y) \big\}.
\end{align*}

Thus, without loss of generality, we may assume that $X$ and $Y$ are compact.
We need to show that $\dim(Z^2)\leq\max\{\dim(X^2), \dim(Y^2)\}$.
Note that $\dim(Z)=\max\{\dim(X),\dim(Y)\}$.
We distinguish two cases:

\emph{Case~1: Assume that $\dim(X)\neq\dim(Y)$.}
Without loss of generality, we have $\dim(X)<\dim(Y)$.
Then
\[
\dim(X\times Y)
\leq \dim(X)+\dim(Y)
\leq 2\dim(Y)-1
\leq \dim(Y^2).
\]
Since $Z^2$ is the union of sets homeomorphic to $X^2$, $X\times Y$ and $Y^2$, we get
\[
\dim(Z^2)
= \max \big\{ \dim(X^2),\dim(X\times Y),\dim(Y^2) \big\}
\leq \max \big\{ \dim(X^2),\dim(Y^2) \big\}.
\]

\emph{Case~2: Assume that $\dim(X)=\dim(Y)$.}
Set $d:=\dim(X)$.
Then $\dim(Z)=d$ and so $\dim(Z^2)\leq 2d$.
If $X$ or $Y$ is of basic type, then 
\begin{align*}
\dim(Z^2)
\leq 2d
= \max \big\{ \dim(X^2), \dim(Y^2) \big\}.
\end{align*}

If $X$ and $Y$ are of exceptional type, then $\dim(X^2)=\dim(Y^2)=2d-1$.
Hence
\[
\dim(X^2\times Y) \leq \dim(X^2)+\dim(Y) = 3d-1,
\]
and similarly $\dim(X\times Y^2)\leq 3d-1$.
Using that $Z^3$ is the union of space homeomorphic to $X^3$, $X^2\times Y$, $X\times Y^2$ and $Y^3$, we deduce that
\begin{align*}
\dim(Z^3)
&= \max \big\{ \dim(X^3),\dim(X^2\times Y),\dim(X\times Y^2),\dim(Y^3) \big\} \\
&\leq\max\{3d-2,3d-1,3d-1,3d-2\}
\leq 3d-1.
\end{align*}
If $Z$ were of basic type, then $\dim(Z^3)=3\dim(Z)$.
Thus, $Z$ is of exceptional type, and so $\dim(Z^2)=2\dim(Z)-1=2d-1 = \max\{\dim(X^2),\dim(Y^2)\}$.
\end{proof}

\begin{prp}
\label{prp:gr_sum_commutative}
Let $A$ and $B$ be commutative \ca{s}.
Then
\[
\gr(A\oplus B)=\max \big\{ \gr(A),\gr(B) \big\}.
\]
\end{prp}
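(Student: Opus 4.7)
The plan is to invoke \autoref{prp:dim_space_squared} via the spectrum formula of \autoref{prp:gr_commutative}, after reducing the problem to a unital commutative overalgebra by means of \autoref{prp:gr_ideal}. The hard topological work (the delicate case of two spaces of the same dimension, both of exceptional type) has already been carried out in \autoref{prp:dim_space_squared}, so the remaining task is merely to set up the right reduction.

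For the lower bound, both $A$ and $B$ are quotients of $A\oplus B$ (via the coordinate projections), and so \autoref{prp:gr_quotient} yields
\[
    \max\{\gr(A),\gr(B)\} \leq \gr(A\oplus B).
\]

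For the upper bound, I would first observe that $A\oplus B$ sits as a (closed, two-sided) ideal inside the unital algebra $\widetilde{A}\oplus\widetilde{B}$: for $(a,b)\in A\oplus B$ and $(u,v)\in \widetilde{A}\oplus\widetilde{B}$, the product $(ua,vb)$ again lies in $A\oplus B$ because $A\lhd\widetilde{A}$ and $B\lhd\widetilde{B}$. Hence \autoref{prp:gr_ideal} gives $\gr(A\oplus B)\leq \gr(\widetilde{A}\oplus\widetilde{B})$. Setting $X^+:=\Prim(\widetilde{A})$ and $Y^+:=\Prim(\widetilde{B})$, which are compact metric spaces since $A$ and $B$ are separable, the unital commutative algebra $\widetilde{A}\oplus\widetilde{B}$ has compact spectrum $X^+\sqcup Y^+$. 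Applying \autoref{prp:gr_commutative} to $\widetilde{A}\oplus\widetilde{B}$, $\widetilde{A}$, and $\widetilde{B}$, and then invoking \autoref{prp:dim_space_squared}, yields
\[
    \gr(\widetilde{A}\oplus\widetilde{B}) = \dim((X^+\sqcup Y^+)^2) \leq \max\{\dim((X^+)^2),\dim((Y^+)^2)\} = \max\{\gr(A),\gr(B)\}.
\]
Combined with the lower bound, this completes the proof.

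Since all the genuine difficulty is absorbed into \autoref{prp:dim_space_squared}, I do not expect any substantive obstacle. The two things that merit careful verification are the ideal structure $A\oplus B\lhd\widetilde{A}\oplus\widetilde{B}$ and the identification of the spectrum of $\widetilde{A}\oplus\widetilde{B}$ with $X^+\sqcup Y^+$, both of which are routine.
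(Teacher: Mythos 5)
Your proposal is correct and follows exactly the paper's own argument: the lower bound via \autoref{prp:gr_quotient}, and the upper bound via the chain $\gr(A\oplus B)\leq\gr(\widetilde{A}\oplus\widetilde{B})\leq\max\{\gr(\widetilde{A}),\gr(\widetilde{B})\}=\max\{\gr(A),\gr(B)\}$ using \autoref{prp:gr_ideal}, \autoref{prp:gr_commutative} and \autoref{prp:dim_space_squared}. The two verifications you flag as routine are indeed routine, and your write-up merely spells out details the paper leaves implicit.
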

\begin{proof}
Let $X$ and $Y$ be locally compact, Hausdorff spaces such that $A\cong C_0(X)$ and $B\cong C_0(Y)$.
Set $Z:=X\sqcup Y$.
Then $A\oplus B\cong C_0(Z)$.
Using \autoref{prp:gr_commutative} at the first and last step, and using \autoref{prp:dim_space_squared} at the second step, we get
\begin{align*}
\gr(A\oplus B)
&= \locdim(Z\times Z)
= \max\big\{ \locdim(X\times X), \locdim(Y\times Y) \big\} \\
&= \max\big\{ \gr(A), \gr(B) \big\}. \qedhere
\end{align*}
\end{proof}

The previous result shows that the generator rank behaves well with respect to sums when both summands are commutative.
Next, we generalize this to the case that only one summand is commutative.
We first recall a standard result about the structure of subalgebras of direct summands.

\begin{lma}
\label{prp:subSum}
Let $A_1,A_2$ be \ca{s}, let $\pi_j\colon A_1\oplus A_2\to A_j$ be the quotient maps onto the summands, for $j=1,2$, and let $B\subseteq A_1\oplus A_2$ be a sub-\ca{} such that $\pi_j(B)=A_j$ for $j=1,2$.
Then $I_1:=\{a_1\in A_1 : (a_1,0)\in B\}$ is an ideal in $A_1$, and $I_2:=\{a_2\in A_2 : (0,a_2)\in B\}$ is an ideal in $A_2$.
Further, there is an isomorphism $\alpha\colon A_1/I_1\to A_2/I_2$ such that
\[
B = \big\{ (a_1,a_2) \in A_1\oplus A_2 : \alpha(a_1+I_1)=a_2+I_2 \big\}.
\]
\end{lma}

\begin{prp}
\label{prp:specialSums}
Let $A$ and $B$ be separable \ca{s}.
Let $I\subseteq A$ be an ideal such that whenever $J\subseteq A$ is an ideal such that $A/J$ is isomorphic to a quotient of $B$, then $I\subseteq J$.
Then $\grPre(A\oplus B)=\max\{\grPre(A),\grPre((A/I) \oplus B)\}$.
\end{prp}
\begin{proof}
The inequality `$\geq$' follows from \autoref{prp:gr0_quotients} using that $A$ and $(A/I)\oplus B$ are quotients of the direct sum $A\oplus B$.
To show the converse inequality, set $n:=\max\{\grPre(A),\grPre((A/I) \oplus B)\}$, which we may assume to be finite.
We will show that $\Gen_{n+1}(A\oplus B)_\sa$ is dense in $(A\oplus B)^{n+1}_\sa$.

Since $\grPre(A)\leq n$, by \autoref{prp:grSep} $\Gen_{n+1}(A)_\sa$ is a dense $G_\delta$-subset of $A^{n+1}_\sa$.
It follows that
\[
D_1 := \big\{ (\vect{a},\vect{b})\in (A\oplus B)^{n+1}_\sa : \vect{a}\in\Gen_{n+1}(A)_\sa \big\}
\]
is a dense $G_\delta$-subset of $(A\oplus B)^{n+1}_\sa$.

Let $\pi\colon A\to A/I$ denote the quotient map, which naturally induces a map $A^{n+1}\to(A/I)^{n+1}$, which we also denote by $\pi$.
Since $\grPre((A/I)\oplus B)\leq n$, $\Gen_{n+1}((A/I)\oplus B)_\sa$ is a dense $G_\delta$-subset of $((A/I)\oplus B)^{n+1}_\sa$.
It follows that
\[
D_2 := \big\{ (\vect{a},\vect{b})\in (A\oplus B)^{n+1}_\sa : (\pi(\vect{a}),\vect{b})\in\Gen_{n+1}((A/I)\oplus B)_\sa \big\}
\]
is a dense $G_\delta$-subset of $(A\oplus B)^{n+1}_\sa$.
It follows from the Baire category theorem that $D:=D_1\cap D_2$ is a dense subset of $(A\oplus B)^{n+1}_\sa$.

We show that $D\subseteq\Gen_{n+1}(A\oplus B)_\sa$, which will finish the proof.
Let $(\vect{a},\vect{b})\in D$, and set $E:=C^*((\vect{a},\vect{b}))\subseteq A\oplus B$.
Let $\pi_A$ and $\pi_B$ denote the quotient maps from $A\oplus B$ onto $A$ and $B$, respectively.
Further, let $\pi'\colon A\oplus B\to (A/I)\oplus B$ denote the quotient map.
Using that $(\vect{a},\vect{b})\in D_1$, it follows that $\pi_A(E)=A$.
Similarly, we obtain that $\pi'(E)=(A/I)\oplus B$, and so $\pi_B(E)=B$.

By \autoref{prp:subSum}, there are ideals $J_1\subseteq A$ and $J_2\subseteq B$ and an isomorphism $\alpha\colon A/J_1\to B/J_2$ such that
\[
E = \big\{ (a,b)\in A\oplus B : \alpha(a+J_1)=b+J_2 \big\}.
\]
Assuming that $A/J_1\neq\{0\}$, it follows that $E/(J_1\oplus J_2)$ is a proper subset of $(A\oplus B)/(J_1\oplus J_2)$.
However, by assumption we have $I\subseteq J_1$.
Hence, $I\oplus 0$ is a smaller ideal than $J_1\oplus J_2$.
Since $\pi'(E)=(A/I)\oplus B$, we have that $E/(I\oplus 0)=(A\oplus B)/(I\oplus 0)$, which leads to a contradiction.
Thus, $A/J_1=\{0\}$, which implies that $E=A\oplus B$.
\end{proof}

\begin{prp}
\label{prp:gr_sumWithCommutative}
Let $A$ and $B$ be \ca{s} and assume that $B$ is commutative.
Then:
\[
\grPre(A\oplus B)=\max\{\grPre(A),\grPre(B)\}, \quad\text{ and }\quad
\gr(A\oplus B)=\max\{\gr(A),\gr(B)\}.
\]
\end{prp}
\begin{proof}
\emph{Step~1:
We prove the statement for $\grPre$ under the additional assumption that $A$ and $B$ are separable.
}
In this case, let $I\subseteq A$ be the smallest ideal such that $A/I$ is commutative.
(Such an ideal exists, and $A/I$ is the abelianization of $A$.)
Then the assumptions of \autoref{prp:specialSums} are satisfied, and we obtain
\[
\grPre(A\oplus B)
= \max \big\{ \grPre(A),\grPre((A/I) \oplus B) \big\}.
\]
Since $A/I$ and $B$ are commutative, applying Propositions~\ref{prp:gr0_quotients}  and~\ref{prp:gr_sum_commutative} we get
\[
\grPre((A/I) \oplus B)
= \max\big\{ \grPre(A/I),\grPre(B) \big\}
\leq \max\big\{ \grPre(A),\grPre(B) \big\},
\]
and thus $\grPre(A\oplus B)\leq\max\{\grPre(A),\grPre(B)\}$.
The converse inequality follows from \autoref{prp:gr0_quotients} using that $A$ and $B$ are quotients of $A\oplus B$.

\emph{Step~2:
We prove statement for $\grPre$ for general \ca{s}.
}
Set
\[
\mathcal{S}_1 := \{A'\subseteq A \text{ separable} : \grPre(A')\leq\grPre(A) \big\}.
\]
Using that $\grPre$ satisfies \axiomD{6}, the family $\mathcal{S}_1$ approximates $A$.
Similarly, $B$ is approximated by the family $\mathcal{S}_2$ of separable sub-\ca{s} $B'\subseteq B$ that satisfy $\grPre(B')\leq\grPre(B)$.
Given $A'\in\mathcal{S}_1$ and $B'\in\mathcal{S}_2$, note that $B'$ is automatically commutative, whence by Step~1 we obtain
\[
\grPre(A'\oplus B')
= \max \big\{ \grPre(A'),\grPre(B') \big\}
\leq \max \big\{ \grPre(A),\grPre(B) \big\}.
\]
Since $A\oplus B$ is approximated by such $A'\oplus B'$, and since $\grPre$ satisfies \axiomD{5}, we deduce that $\gr(A\oplus B)\leq\max\{ \grPre(A),\grPre(B) \}$.
The converse inequality is clear.

\emph{Step~3:
We prove statement for $\gr$.
}
Applying \autoref{prp:gr_grPreMaxRR} at the first and last step, and using Step~2 and that the real rank behaves well with respect to direct sums (since the real rank is a noncommutative dimension theory as noted in \cite[Remark~2]{Thi13TopDimTypeI}), we have
\begin{align*}
\gr(A\oplus B)
&= \max \big\{ \grPre(A\oplus B), \rr(A\oplus B) \big\} \\
&= \max \big\{ \grPre(A), \grPre(B), \rr(A), \rr(B) \big\} 
= \max \big\{ \gr(A), \gr(B) \big\}. \qedhere
\end{align*}
\end{proof}

\section{Permanence properties}
\label{sec:permanence}

In this section, we show that the generator rank enjoys the same permanence properties as its precursor $\grPre$.

Recall that $A^+$ denotes the forced unitization of a \ca{} $A$.

\begin{lma}
\label{prp:gr_forcedUnit}
Let $A$ be a \ca.
Then $\gr(A^+)=\gr(A)$.
\end{lma}
\begin{proof}
If $A$ is nonunital, then $A^+=\widetilde{A}$ and the result follows from the definition.
If $A$ is unital, then $A^+=A\oplus\mathbb{C}$, and by \autoref{prp:gr_sumWithCommutative} we have
\[
\gr(A^+)
= \gr(A\oplus\mathbb{C}) 
= \max\big\{ \gr(A), \gr(\mathbb{C}) \big\}
= \gr(A). \qedhere
\]
\end{proof}

\begin{thm}
\label{prp:gr_idealQuotExt}
Let $A$ be a \ca{}, and let $I\subseteq A$ be an ideal.
Then:
\[
\max\big\{ \gr(I),\gr(A/I) \big\} \leq \gr(A) \leq \gr(I)+\gr(A/I)+1.
\]
\end{thm}
\begin{proof}
It is known that the real rank does not increase when passing to ideals or quotients;
see \cite[Th\'{e}or\`{a}me~1.4]{Elh95RRExt}.
By Propositions~\ref{prp:gr0_ideals} and~\ref{prp:gr0_quotients}, $\grPre$ does not increase when passing to ideals or quotients.
Applying \autoref{prp:gr_grPreMaxRR}, we get
\[
\gr(I)
= \max \big\{ \rr(I), \grPre(I) \big\}
\leq \max \big\{ \rr(A), \grPre(A) \big\}
= \gr(A),
\]
and analogously $\gr(A/I)\leq\gr(A)$.

To verify $\gr(A) \leq \gr(I)+\gr(A/I)+1$, we consider $I$ as an ideal in $\widetilde{A}$.
Note that $\widetilde{A}/I$ is naturally isomorphic to $(A/I)^+$.
Using \autoref{prp:gr_forcedUnit}, we obtain
\[
\grPre(\widetilde{A}/I)
= \grPre((A/I)^+)
= \gr((A/I)^+)
= \gr(A/I).
\]

Applying \autoref{prp:gr0_extensions} and that $\grPre(I)\leq\gr(I)$ by \autoref{prp:grPre_less_gr}, we get
\[
\gr(A)
= \grPre(\widetilde{A})
\leq \grPre(I)+\grPre(\widetilde{A}/I)+1
\leq \gr(I)+\gr(A/I)+1. \qedhere
\]
\end{proof}

\begin{thm}
\label{prp:gr_approx_limits}
Let $A$ be a \ca{} and $n\in\mathbb{N}$.
Assume that $A$ is approximated by sub-\ca{s} $A_\lambda\subseteq A$ with $\gr(A_\lambda)\leq n$ for each $\lambda$.
Then $\gr(A)\leq n$.

Moreover, if $A=\varinjlim_{j} A_j$ is an inductive limit, then $\gr(A)\leq\liminf_{j} \gr(A_j)$.
\end{thm}
\begin{proof}
\emph{Case~1: We assume that $A$ is unital.}
By \autoref{prp:grPre_less_gr}, we have $\grPre(A_\lambda)\leq\gr(A_\lambda)\leq n$ for each $\lambda$.
Using \autoref{prp:gr0_approx}, we obtain $\gr(A)=\grPre(A)\leq n$.

\emph{Case~2: We assume that $A$ is nonunital.}
In this case, for each $\lambda$, the force unitization $A_\lambda^+$ is a unital sub-\ca{} of $\widetilde{A}$, and the family $(A_\lambda^+)_\lambda$ approximates~$\widetilde{A}$.
By \autoref{prp:gr_forcedUnit}, we have $\gr(A_\lambda^+)=\gr(A_\lambda)\leq n$ for each $\lambda$.
Now the result follows from Case~1.

The statement for inductive limits is proved analogously to \autoref{prp:gr0_limits}, using that the generator rank does not increase when passing to quotients, and using the result for approximation by subalgebras.
\end{proof}

Since we do not know how to compute $\grPre$ of direct sums (\autoref{qst:gr0_sums}), the analog for the generator rank is also unclear:

\begin{qst}
\label{qst:gr_sums}
Do we have $\gr(A\oplus B)=\max\{\gr(A),\gr(B)\}$ for all $A$ and $B$?
\end{qst}

We provide a positive answer to \autoref{qst:gr_sums} for the case that both algebras have real rank zero (see \autoref{prp:gr_sum_rr0}), and for the case that one of the algebras is commutative (see \autoref{prp:gr_sumWithCommutative}).
We note that already the case that both algebras are commutative is surprisingly nontrivial;
see \autoref{prp:gr_sum_commutative}.

\section{Generator rank of AF-algebras}
\label{sec:AF}

In this section, we show that every finite-dimensional \ca{} has generator rank at most one;
see \autoref{prp:gr_fd}.
Using that the generator rank behaves well with respect to inductive limits, we deduce the main result:
AF-algebras have generator rank at most one;
see \autoref{prp:gr_AF-alg}.

\begin{lma}
\label{prp:gr_sum_rr0}
Let $A, B$ be \ca{s} of real rank zero.
Then $\gr(A\oplus B)=\max\{\gr(A),\gr(B)\}$.
\end{lma}
\begin{proof}
Since $A$ and $B$ are quotients of $A\oplus B$, it follows from \autoref{prp:gr_idealQuotExt} that $\gr(A\oplus B)\geq\max\{\gr(A),\gr(B)\}$.
It remains to verify the converse inequality.
Since $A$ has real rank zero, we have $\gr(A)=\grPre(A)$ by \autoref{prp:gr_grPreMaxRR}.
Similarly, $\gr(B)=\grPre(B)$ and $\gr(A\oplus B)=\grPre(A\oplus B)$.
Thus, it suffices to prove $\grPre(A\oplus B)\leq\max\{\grPre(A),\grPre(B)\}$.

Set $n:=\max\{\grPre(A),\grPre(B)\}$, which we may assume to be finite.
To verify $\grPre(A\oplus B)\leq n$, let $\vect{a}\in A^{n+1}_\sa$, $\vect{b}\in B^{n+1}_\sa$, $\varepsilon>0$, $x\in A$, and $y\in B$.
We need to find $\vect{c}\in A^{n+1}_\sa$ and $\vect{d}\in B^{n+1}_\sa$ such that
\[
\vect{c}=_\varepsilon\vect{a}, \quad
\vect{d}=_\varepsilon\vect{b}, \quad\text{ and }\quad
(x,y)\in_\varepsilon C^*((\vect{c},\vect{d})).
\]
Since $\rr(A)=\rr(B)=0$, we obtain $a_0'\in A_\sa$ and $b_0'\in B_\sa$ such that $a_0'=_{\varepsilon/2} a_0$, $b_0'=_{\varepsilon/2} b_0$ and such that the spectra $\sigma(a_0')$ and $\sigma(b_0')$ are finite, disjoint and do not contain $0$.
Let $\delta_0>0$ be smaller than the distance between any two points in $\sigma(a_0')\cup\sigma(b_0')\cup\{0\}$.
Let $f,g\colon\mathbb{R}\to[0,1]$ be continuous functions such that:
\begin{enumerate}
\item
$f$ takes value $1$ on the $\delta_0/4$-neighborhood of $\sigma(a_1')$, and takes value $0$ on the $\delta_0/4$-neighborhood of $\sigma(b_1')\cup\{0\}$.
\item
$g$ takes value $1$ on the $\delta_0/4$-neighborhood of $\sigma(b_1')$, and takes value $0$ on the $\delta_0/4$-neighborhood of $\sigma(a_1')\cup\{0\}$.
\end{enumerate}

Choose $\delta>0$ with $\delta<\varepsilon/2$ and such that:
\begin{enumerate}
\item
Whenever $c_0\in A_\sa$ satisfies $c_0=_\delta a_0'$,
then the spectrum $\sigma(c_0)$ is contained in the $\delta_0/4$-neighborhood of $\sigma(a_0')$.
\item
Whenever $d_0\in B_\sa$ satisfies $d_0=_\delta b_0'$,
then the spectrum $\sigma(d_0)$ is contained in the $\delta_0/4$-neighborhood of $\sigma(b_0')$.
\end{enumerate}

Using that $\grPre(A), \grPre(B)\leq n$, we obtain $\vect{c}\in A^{n+1}_\sa$, $\vect{d}\in B^{n+1}_\sa$ and nc-poly\-no\-mi\-als $p$ and $q$ such that
\[
\vect{c}=_\delta(a_0',a_1,\ldots,a_n), \quad
x=_\varepsilon p(\vect{c}), \quad
\vect{d}=_\delta(b_0',b_1,\ldots,b_n), \quad\text{ and }\quad
y=_\varepsilon q(\vect{d}).
\]
Then $(\vect{c},\vect{d})=_\varepsilon(\vect{a},\vect{b})$, and we will show that $(x,y)\in_\varepsilon C^*((\vect{c},\vect{d}))$.

By choice of $\delta$, we have $f(c_0,d_0)=(1,0)$ and $g(c_0,d_0)=(0,1)$.
Then:
\[
(x,y)
=_\varepsilon (p(\vect{c}),q(\vect{d}))
= f(c_0, d_0) p(\vect{c},\vect{d}) + g(c_0, d_0) q(\vect{c},\vect{d}) 
\in C^*((\vect{c},\vect{d})). \qedhere
\]
\end{proof}

\begin{lma}
\label{prp:gr_fd}
Let $A$ be a finite-dimensional \ca.
Then $\gr(A)\leq 1$.
\end{lma}
\begin{proof}
The statement follows from \autoref{prp:gr_sum_rr0} once we show that every matrix algebra has generator rank at most one.

Let $d\geq 1$.
To show $\gr(M_d(\mathbb{C}))\leq 1$, let $a,b\in M_d(\mathbb{C})_\sa$ and $\varepsilon>0$.
We will approximate $a$ and $b$ by self-adjoint elements that generate $M_d(\mathbb{C})$.
Choose a unitary $u$ such that $uau^*$ is diagonal, and then choose a diagonal, self-adjoint element $a'\in M_d(\mathbb{C})$ such that $a'=_\varepsilon uau^*$ and such that the spectrum of $a'$ contains $d$ different nonzero entries.
Next, choose $b'\in M_d(\mathbb{C})_\sa$ such that $b'=_\varepsilon ubu^*$ and such that all entries in the matrix $b'$ are nonzero.

Let $e_{jk}\in M_d(\mathbb{C})$ denote the matrix units for $j,k\in\{1,\ldots,d\}$.
Let $t_1,\ldots,t_d$ be the pairwise distinct, nonzero diagonal entries of~$a'$.
Given $k\in\{1,\ldots,d\}$, let $f_k\colon\mathbb{R}\to[0,1]$ be a continuous function that vanishes on $0$ and on $t_j$ for $j\neq k$, and that satisfies $f_k(t_k)=1$.
Applying functional calculus, we obtain $e_{kk}=f_k(a')\in C^*(a',b')$.
Given $j,k\in\{1,\ldots,d\}$, we have $b_{jk}'e_{jk}=e_{jj}b'e_{kk}\in C^*(a',b')$, where $b_{jk}'$ is the (nonzero) $jk$-entry of $b'$.
It follows that $C^*(a',b')$ contains all matrix units, and thus $C^*(a',b')=M_d(\mathbb{C})$.

We have $u^*a'u=_\varepsilon a$ and $u^*b'u=_\varepsilon b$.
Moreover, $u^*a'u$ and $u^*b'u$ generate $M_d(\mathbb{C})$ since $C^*(u^*a'u,u^*b'u)=u^*C^*(a',b')u=M_d(\mathbb{C})$.
\end{proof}

Recall that a \ca{} $A$ is said to be \emph{locally finite-dimensional} if for every finite subset $F\subseteq A$ and $\varepsilon>0$ there exists a finite-dimensional sub-\ca{} $B\subseteq A$ such that $x\in_\varepsilon B$ for every $x\in F$.
Further, $A$ is \emph{approximately finite-dimensional}, or an \emph{AF-algebra}, if $A$ is isomorphic to an inductive limit of finite-dimensional \ca{s}.
Equivalently, $A$ contains a directed family of finite-dimensional sub-\ca{s} $A_\lambda\subseteq A$ such that $\bigcup_\lambda A_\lambda$ is dense in $A$.
Every AF-algebra is locally finite-dimensional, and the converse holds for separable \ca{s}, but not in general;
see \cite{FarKat10NonsepUHF1}.

\begin{thm}
\label{prp:gr_AF-alg}
Every locally finite-dimensional \ca{} has generator rank at most one.

If $A$ is a separable AF-algebra, then a generic element of $A$ is a generator.
In particular, $A$ is singly generated.
\end{thm}
\begin{proof}
By \autoref{prp:gr_fd}, every finite-dimensional \ca{} has generator rank at most one.
By definition, a locally finite-dimensional \ca{} is approximated by finite-dimensional sub-\ca{s} and thus has generator rank at most one by \autoref{prp:gr_approx_limits}.
Since every separable AF-algebra is locally finite-dimensional, we have $\gr(A)\leq 1$.
By \autoref{rmk:gr1_means_generic}, a generic element of $A$ is a generator.
\end{proof}

Recall that a von Neumann algebra $M$ is said to be \emph{hyperfinite} if it contains a directed family of finite-dimensional sub-\ca{s} $M_\lambda\subseteq M$ such that $\bigcup_\lambda M_\lambda$ is weak*-dense in $M$;
see \cite[Definition~III.3.4.1, p.291]{Bla06OpAlgs}.
It is known that every hyperfinite von Neumann algebra with separable predual is singly generated (as a von Neumann algebra);
see \cite[Theorem~1]{SuzSai63OpsGenvN}.
We can improve this result:

\begin{cor}
Let $M$ be a hyperfinite von Neumann algebra with separable predual.
Then the set of elements that generate $M$ as a von Neumann algebra is weak*-dense in $M$.
\end{cor}
\begin{proof}
The assumptions imply that $M$ contains a weak*-dense sub-\ca{} $A\subseteq M$ such that $A$ is a separable AF-algebra.
Set $G:=\{a\in A : A=C^*(a)\}$.
Note that every $a\in G$ generates $M$ as a von Neumann algebra.
By \autoref{prp:gr_AF-alg}, $G$ is norm-dense in $A$, and consequently weak*-dense in $M$.
\end{proof}


\providecommand{\etalchar}[1]{$^{#1}$}
\providecommand{\href}[2]{#2}

\end{document}